\numberwithin{equation}{section}
\def\N{\mathbb N}
\def\R{\mathbb R}
\def\S{\mathbb S}
\providecommand{\norm}[1]{\left\lVert#1\right\rVert}
\newcommand{\ip}[2]{\left\langle #1, #2 \right\rangle}
\providecommand{\abs}[1]{\left\lvert#1\right\rvert}
\DeclareMathOperator*{\Argmin}{argmin}
\DeclareMathOperator{\Fix}{\mathcal{F}}
\DeclareMathOperator{\AC}{\mathcal{A}}
\DeclareMathOperator{\Diam}{diam}
\newcommand{\sk    }[1]{\left( #1 \right)}
\newcommand{\ck    }[1]{\left\{#1 \right\}}
\newcommand{\CAT}{\textup{CAT}}
\theoremstyle{plain}
\newtheorem{theorem}{Theorem}[section]
\newtheorem{lemma}[theorem]{Lemma}
\newtheorem{corollary}[theorem]{Corollary}
\newtheorem{example}[theorem]{Example}
\theoremstyle{definition}
\theoremstyle{remark}
\newtheorem{remark}[theorem]{Remark}
\title[Fixed points of vicinal mappings in geodesic spaces]
{Existence and approximation of fixed points of vicinal mappings in geodesic spaces} 
\author[F. Kohsaka]{Fumiaki Kohsaka}
\address[F. Kohsaka]
{Department of Mathematical Sciences, Tokai University, 
Kitakaname, Hiratsuka, Kanagawa 259-1292, Japan}
\email{f-kohsaka@tsc.u-tokai.ac.jp}
\subjclass[2010]{47H10, 47J05, 52A41, 90C25}
\keywords{${\rm CAT}(1)$ space, convex function, 
firmly vicinal mapping, fixed point, 
geodesic space with curvature bounded above, 
minimizer, resolvent, vicinal mapping}
\begin{document}
\begin{abstract}
We propose the concepts of vicinal mappings 
and firmly vicinal mappings in metric spaces. 
We obtain fixed point and convergence theorems 
for these mappings in complete geodesic spaces with 
curvature bounded above by one 
and apply our results to convex optimization in such spaces. 
\end{abstract}
\maketitle
\section{Introduction}\label{sec:intro}

In this paper, we first introduce the 
classes of vicinal mappings and firmly vicinal mappings 
in metric spaces. 
We next obtain fixed point and convergence theorems 
for such mappings 
in complete $\CAT(1)$ spaces such that the distance of 
two arbitrary points in the space is less than $\pi/2$.  
Since the resolvents of convex functions 
proposed by Kimura and Kohsaka~\cite{MR3463526} 
are firmly vicinal, 
we can apply our results to convex optimization 
in such spaces. 

The problem of finding fixed points of nonexpansive mappings 
is strongly related to convex optimization in 
Hadamard spaces, i.e., complete $\CAT(0)$ spaces.  
In fact, it is known~\cites{MR3241330, MR3346750, MR1360608, MR1651416} that 
if $X$ is an Hadamard space and $f$ 
is a proper lower semicontinuous convex function 
of $X$ into $(-\infty, \infty]$, then 
the resolvent $J_{f}$ of $f$, which is given by 
\begin{align}\label{eq:classical-resolvent}
 J_{f} x = \Argmin_{y\in X} \ck{f(y) + \frac{1}{2}d(y, x)^2}
\end{align}
for all $x\in X$, 
is a well-defined nonexpansive mapping of $X$ into itself 
such that the fixed point set $\Fix(J_{f})$ of $J_{f}$
coincides with the set $\Argmin_{X} f$ of all minimizers of $f$. 
It is also known~\cite{MR3206460}*{Proposition~3.3} that 
$J_{f}$ is firmly nonexpansive, i.e., 
\begin{align*}
 d\left(J_fx, J_{f}y\right) 
 \leq d\bigl(\alpha x\oplus (1-\alpha) J_{f}x, 
\alpha y\oplus (1-\alpha) J_{f}y\bigr)
\end{align*}
whenever $x,y\in X$ and $\alpha \in (0,1)$. 
Thus we can apply the fixed point theory 
for nonexpansive mappings to the problem of 
minimizing convex functions in the space. 
See also~\cites{MR2798533, MR2548424} and~\cite{MR0390843} 
on the resolvents of convex functions in Hilbert and Banach spaces, respectively. 

In 2013, using the resolvent $J_{f}$ 
given by~\eqref{eq:classical-resolvent},  
Ba{\v{c}}{\'a}k~\cite{MR3047087}*{Theorem~1.4} obtained 
a $\Delta$-convergence theorem on the 
proximal point algorithm for 
convex functions in Hadamard spaces, 
which generalizes the corresponding result by 
Br{\'e}zis and Lions~\cite{MR491922}*{Th\'eor\`eme~9} 
in Hilbert spaces to more general Hadamard spaces.  
This algorithm was first introduced by Martinet~\cite{MR0298899} 
for variational inequality problems 
and generally studied by Rockafellar~\cite{MR0410483} 
for maximal monotone operators in Hilbert spaces. 
See also Bruck and Reich~\cite{MR470761} 
on some related results for strongly nonexpansive mappings in Banach spaces. 
Recently, Kimura and Kohsaka~\cite{MR3574140} 
obtained existence and convergence theorems 
on two modified proximal point algorithms 
for convex functions in Hadamard spaces. 

On the other hand, 
Ohta and P{\'a}lfia~\cite{MR3396425}*{Definition~4.1 and Lemma~4.2}
showed that the resolvent $J_f$ in~\eqref{eq:classical-resolvent} 
is well defined also in a complete $\CAT(1)$ space 
such that $\Diam(X)<\pi/2$, where $\Diam(X)$ 
denotes the diameter of $X$. 
Using this result, they~\cite{MR3396425}*{Theorem~5.1} 
studied the proximal point algorithm for convex functions in such spaces. 
We note that if $X$ is a complete $\CAT(1)$ space 
such that $\Diam(X)<\pi/2$, then 
every sequence in $X$ has 
a $\Delta$-convergent subsequence 
and every proper lower semicontinuous convex function 
of $X$ into $(-\infty, \infty]$ has a minimizer; see~\cite{MR2508878}*{Corollary~4.4} 
and~\cite{MR3463526}*{Corollary~3.3}, respectively. 
Thus the condition $\Diam(X)<\pi/2$ 
for a complete $\CAT(1)$ space $X$ 
can be seen as a counterpart of 
the boundedness condition for an Hadamard space $X$.   

Considering the geometric difference between 
Hadamard spaces and complete $\CAT(1)$ spaces, 
Kimura and Kohsaka~\cite{MR3463526}*{Definition~4.3} 
recently  
introduced the concept of resolvents of convex functions 
in complete $\CAT(1)$ spaces as follows. 
Let $X$ be a complete $\CAT(1)$ space 
which is admissible, i.e., 
\begin{align}\label{eq:intro-admissible}
 d(v,v') < \frac{\pi}{2}
\end{align}
for all $v,v'\in X$ 
and $f$ a proper lower semicontinuous 
convex function of $X$ into $(-\infty, \infty]$. 
It is known~\cite{MR3463526}*{Theorem~4.2} that 
the resolvent $R_{f}$ of $f$, which is given by 
\begin{align}\label{eq:R_f-intro}
 R_{f} x = \Argmin_{y\in X} \bigl\{f(y)+\tan d(y, x)\sin d(y, x)\bigr\}
\end{align}
for all $x\in X$, is a well-defined mapping of $X$ into itself.  
It is also known~\cite{MR3463526}*{Theorem~4.6} that 
$\Fix(R_f)$ coincides with $\Argmin_X f$, the inequality 
\begin{align}
 \begin{split}\label{eq:firmly-vicinal-intro}
  &\bigl(C_x^2(1+C_y^2)C_y+C_y^2(1+C_x^2)C_x\bigr)
  \cos d(R_fx,R_fy) \\
  &\quad \geq C_x^2 (1+C_y^2) \cos d(R_fx,y) 
 +C_y^2 (1+C_x^2) \cos d(R_fy,x) 
 \end{split}
\end{align}
holds for all $x,y\in X$, and $R_{f}$ is firmly spherically nonspreading, i.e., 
 \begin{align}\label{eq:f-s-nonsp-intro}
 (C_x + C_y) \cos^2 d(R_fx,R_fy) 
  \geq 2\cos d(R_fx,y)\cos d(R_fy,x) 
\end{align}
for all $x,y\in X$, 
where $C_z=\cos d(R_fz,z)$ for all $z\in X$. 
The inequality~\eqref{eq:firmly-vicinal-intro} means that 
$R_f$ is firmly vicinal in the sense of this paper. 

Moreover, Kimura and Kohsaka~\cite{MR3463526} obtained 
fixed point theorems and $\Delta$-convergence theorems for 
firmly spherically nonspreading mappings 
and applied them to convex optimization in 
complete $\CAT(1)$ spaces. 
However, they did not study the fixed point problem for 
mappings satisfying~\eqref{eq:firmly-vicinal-intro}. 
Since~\eqref{eq:firmly-vicinal-intro} is 
stronger than~\eqref{eq:f-s-nonsp-intro}, 
we can obtain fixed point and $\Delta$-convergence theorems 
which are independent of the results in~\cite{MR3463526}.  

More recently, applying the resolvent $R_{f}$  
given by~\eqref{eq:R_f-intro}, 
Kimura and Kohsaka~\cite{KimuraKohsaka-PPA-CAT1} 
and 
Esp{\'{\i}}nola and Nicolae~\cite{MR3597363} 
independently studied the proximal point algorithm 
for convex functions in complete $\CAT(\kappa)$ spaces 
with a positive real number $\kappa$. 

This paper is organized as follows. 
In Section~\ref{sec:pre}, we recall some definitions and 
results needed in this paper. 
In Section~\ref{sec:lem}, we give the definitions of 
vicinal mappings and firmly vicinal mappings 
in metric spaces such that 
the distance of two arbitrary points 
is less than or equal to $\pi/2$; 
see~\eqref{eq:vicinal} and~\eqref{eq:firmly-vicinal}. 
In Section~\ref{sec:main}, we obtain 
a fixed point theorem for vicinal mappings 
and a $\Delta$-convergence theorem 
for firmly vicinal mappings 
in admissible complete $\CAT(1)$ spaces; 
see Theorems~\ref{thm:vicinal-fpt} 
and~\ref{thm:firmly-vicinal-conv}, respectively.   
We also apply our results to convex optimization 
in such spaces; see Corollary~\ref{cor:cmp}.  
In Section~\ref{sec:positive-kappa}, 
we define the concepts of $\kappa$-vicinal mappings 
and firmly $\kappa$-vicinal mappings and obtain 
two corollaries of our results 
in complete $\CAT(\kappa)$ spaces 
with a positive real number $\kappa$; 
see Corollaries~\ref{cor:vicinal-fpt-kappa} 
and~\ref{cor:firmly-vicinal-conv-kappa}. 

\section{Preliminaries}
\label{sec:pre}

Throughout this paper, we denote by 
$\N$ and $\R$ the sets of 
all positive integers and all real numbers, 
respectively. 
We denote by $X$ a metric space with metric $d$. 
The diameter of $X$ is denoted by $\Diam(X)$. 
The closed ball with radius $r\geq 0$ 
centered at $p\in X$ is denoted by $S_r[p]$. 
For a mapping $T$ of $X$ into itself, 
we denote by $\Fix(T)$ the set of all $u\in X$ 
such that $Tu=u$. 
For a function $f$ of $X$ into $(-\infty, \infty]$,  
we denote by $\Argmin_X f$ or $\Argmin_{y\in X} f(y)$ 
the set of all $u\in X$ such that $f(u)=\inf f(X)$. 
In the case where $\Argmin_X f = \{p\}$ for some $p\in X$, 
we identify $\Argmin_X f$ with $p$. 

A mapping $T$ of $X$ into itself is said to be 
asymptotically regular if 
\begin{align*}
 \lim_{n\to \infty}d(T^{n+1}x,T^{n}x) = 0
\end{align*}
for all $x\in X$. For a sequence $\{x_n\}$ in $X$, 
the asymptotic center $\AC\bigl(\{x_n\}\bigr)$ 
of $\{x_n\}$ is defined by 
\begin{align*}
 \AC\bigl(\{x_n\}\bigr) 
 =\left\{z\in X: \limsup_{n\to \infty}d(x_n, z)
 =\inf_{y\in X}\limsup_{n\to \infty}d(x_n, y)\right\}. 
\end{align*}
The sequence $\{x_n\}$ is said to be $\Delta$-convergent to 
a point $p\in X$ if 
\begin{align*}
 \AC\bigl(\{x_{n_i}\}\bigr) = \{p\}
\end{align*}
for each subsequence $\{x_{n_i}\}$ of $\{x_n\}$. 
If $X$ is a Hilbert space, then 
the sequence $\{x_n\}$ is $\Delta$-convergent to $p$ 
if and only if it is weakly convergent to the point.  
For a sequence $\{x_n\}$ in $X$, 
we denote by $\omega_{\Delta}\bigl(\{x_n\}\bigr)$ 
the set of all $z\in X$ such that 
there exists a subsequence of $\{x_n\}$ 
which is $\Delta$-convergent to $z$. 
See~\cites{MR3241330, MR2508878, MR2416076} for more details 
on the concept of $\Delta$-convergence. 

Let $\lambda$ be a positive real number. 
A metric space $X$ is said to be $\lambda$-geodesic if 
for each $x,y\in X$ with $d(x,y)<\lambda$, 
there exists a mapping $c\colon [0,l]\to X$ such that 
$c(0)=x$, $c(l)=y$, 
and 
\begin{align*}
 d\bigl(c(t_1), c(t_2)\bigr) = \abs{t_1-t_2}
\end{align*}
for all $t_1,t_2\in [0,l]$, where $l=d(x,y)$. 
The mapping $c$ is called a geodesic 
from $x$ to $y$. 
In this case, the geodesic segment $[x,y]$ is defined by 
\begin{align*}
 [x,y] = \{c(t): 0\leq t\leq l\} 
\end{align*}
and the point $\alpha x\oplus (1-\alpha)y$ 
is defined by 
\begin{align*}
 \alpha x\oplus (1-\alpha)y = c\bigl((1-\alpha)l\bigr)
\end{align*}
for all $\alpha\in [0,1]$. 
A subset $C$ of a $\lambda$-geodesic space $X$ 
such that $d(v,v')<\lambda$ for all $v,v'\in C$ 
is said to be 
convex if 
\begin{align*}
 \alpha x\oplus (1-\alpha)y \in C
\end{align*}
whenever $x,y\in C$, 
$c$ is a geodesic from $x$ to $y$, 
and $\alpha\in [0,1]$. 
We note that the set $[x,y]$ and 
the point $\alpha x \oplus (1-\alpha)y$ 
depend on the choice of a geodesic $c$ 
from $x$ to $y$.  
However, they are determined uniquely 
if the space $X$ is uniquely $\lambda$-geodesic, i.e., 
there exists a unique geodesic 
from $x$ to $y$ for each $x,y\in X$ with $d(x,y)<\lambda$. 
See Ba{\v{c}}{\'a}k~\cite{MR3241330} and 
Bridson and Haefliger~\cite{MR1744486} 
for more details on geodesic spaces. 

Let $H$ be a real Hilbert space with inner product
$\ip{\,\cdot\,}{\,\cdot\,}$ and the induced norm $\norm{\,\cdot\,}$ 
and $S_H$ the unit sphere of $H$. 
The spherical metric $\rho_{S_H}$ on $S_H$ is defined by 
\begin{align*}
 \rho_{S_H}(x,y) = \arccos \ip{x}{y}
\end{align*}
for all $x,y\in S_H$. It is known that $\left(S_H, \rho_{S_H}\right)$ 
is a uniquely $\pi$-geodesic complete metric space whose 
metric topology coincides with the relative norm topology on $S_H$. 
If $x,y\in S_H$ and $0<\rho_{S_H}(x,y) < \pi$, 
then the unique geodesic $c$ from $x$ to $y$ is given by 
\begin{align*}
 c(t) = (\cos t) x + (\sin t)\cdot \frac{y-\ip{x}{y}x}{\norm{y-\ip{x}{y}x}} 
\end{align*}
for all $t\in [0,\rho_{S_H}(x,y)]$. 
The space $\left(S_H, \rho_{S_H}\right)$ is called a 
Hilbert sphere. 
We denote by $\S^2$ the unit sphere of the 
three dimensional Euclidean space $\R^3$ 
with the spherical metric $\rho_{\S^2}$ on $\S^2$.  

Let $X$ be a $\pi$-geodesic metric space 
and $x_1,x_2,x_3$ points in $X$ satisfying 
\begin{align}\label{eq:perimeter}
 d(x_1,x_2) + d(x_2,x_3) + d(x_3,x_1) <2\pi. 
\end{align}
According to~\cite{MR1744486}*{Lemma 2.14 in Chapter~I.2}, 
there exist $\bar{x}_1, \bar{x}_2, \bar{x}_3\in \S^2$ such that 
$d(x_{i}, x_{i+1})=\rho_{\S^2}(\bar{x}_{i}, \bar{x}_{i+1})$ 
for all $i\in \{1,2,3\}$, where $x_4=x_1$ and 
$\bar{x}_4=\bar{x}_1$. 
The sets $\Delta$ and $\bar{\Delta}$ given by 
\begin{align*}
 \Delta = [x_1,x_2] \cup [x_2,x_3] \cup [x_3,x_1] 
 \quad \textrm{and} \quad 
 \bar{\Delta} = [\bar{x}_1,\bar{x}_2] \cup [\bar{x}_2,\bar{x}_3] 
 \cup [\bar{x}_3,\bar{x}_1] 
\end{align*}
are called a geodesic triangle with vertices $x_1,x_2,x_3$ and 
a comparison triangle for $\Delta$ in $\S^2$, respectively. 
A point $\bar{p}\in \bar{\Delta}$ is called a 
comparison point for $p\in \Delta$ if 
\begin{align*}
 p\in [x_i, x_j], \quad 
 \bar{p} \in [\bar{x}_i, \bar{x}_j], \quad 
 \textrm{and} \quad 
 d(x_i, p) = \rho_{\S^2} (\bar{x}_i, \bar{p}) 
\end{align*}
for some distinct $i,j\in \{1,2,3\}$. 

A metric space $X$ is said to be a $\CAT(1)$ space 
if it is $\pi$-geodesic and 
\begin{align*}
 d(p, q) \leq \rho_{\S^2} (\bar{p}, \bar{q})
\end{align*}
whenever $\Delta$ is a geodesic triangle 
with vertices $x_1,x_2,x_3\in X$ 
satisfying~\eqref{eq:perimeter}, 
$\bar{\Delta}$ is a comparison triangle for $\Delta$ in $\S^2$, 
and $\bar{p}, \bar{q}\in \bar{\Delta}$ are comparison points for 
$p, q\in \Delta$, respectively. 
In this case, $X$ is uniquely $\pi$-geodesic. 
It is known that Hilbert spaces, Hilbert spheres, 
and Hadamard spaces are complete $\CAT(1)$ spaces. 
See Ba{\v{c}}{\'a}k~\cite{MR3241330}, 
Bridson and Haefliger~\cite{MR1744486}, 
and Goebel and Reich~\cite{MR744194} for more details 
on Hadamard spaces, $\CAT(\kappa)$ spaces with a real number 
$\kappa$, and Hilbert spheres, respectively. 

A $\CAT(1)$ space $X$ is said to be admissible 
if~\eqref{eq:intro-admissible} holds for all $v,v'\in X$. 
A sequence $\{x_n\}$ in $X$ 
is said to be spherically bounded if 
\begin{align*}
 \inf_{y\in X} \limsup_{n\to \infty} 
 d(x_n, y) < \frac{\pi}{2}. 
\end{align*}
In particular, if $\Diam(X)< \pi /2$, 
then the space $X$ is admissible and every sequence in $X$ is 
spherically bounded. 

We know the following fundamental lemmas.   
\begin{lemma}[\cite{MR2508878}*{Proposition~4.1 and Corollary~4.4}]
\label{lem:E-FL} 
 Let $X$ be a complete $\CAT(1)$ space and 
 $\{x_n\}$ a spherically bounded sequence in $X$. 
 Then $\AC\bigl(\{x_n\}\bigr)$ is a singleton and 
 $\{x_n\}$ has a $\Delta$-convergent subsequence. 
\end{lemma}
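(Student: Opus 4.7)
The plan is to analyze the radius function $r(y)=\limsup_n d(x_n,y)$ and its infimum $r_0:=\inf_{y\in X} r(y)<\pi/2$. Everything hinges on a $\CAT(1)$ convexity-type inequality: for points $p,q$ lying in a region where $r<\pi/2$ and $d(p,q)<\pi$, and for the midpoint $m$ of $[p,q]$, a spherical comparison on $\S^2$ should yield, for each $z$ with $d(p,z),d(q,z)$ controlled, an estimate of the form
\begin{align*}
 \cos\bigl(d(p,q)/2\bigr)\cdot\cos d(m,z)\geq \tfrac{1}{2}\bigl(\cos d(p,z)+\cos d(q,z)\bigr);
\end{align*}
applying this to $z=x_n$ and taking $\limsup$ in $n$ yields the corresponding inequality with $r(p),r(q),r(m)$ in place of the pointwise distances. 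When $d(p,q)>0$ and all cosines are bounded away from zero, this is effectively a strict convexity statement for $-\cos r$ along geodesics.

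\textbf{Existence and uniqueness of the asymptotic center.} First fix $y_0$ with $r(y_0)<\pi/2$; since $r$ is $1$-Lipschitz, any $y$ with $r(y)$ near $r_0$ is confined to a spherically bounded neighborhood of $y_0$, so two near-minimizers are joined by a unique geodesic living in a region where the above inequality is applicable. For a minimizing sequence $\{y_k\}$ with $r(y_k)\to r_0$, combining the inequality applied to $y_k,y_{k'}$ with the trivial lower bound $r(m_{k,k'})\geq r_0$ forces $d(y_k,y_{k'})\to 0$; completeness yields a limit $y^{*}$, and $1$-Lipschitz continuity of $r$ gives $r(y^{*})=r_0$, so $y^{*}\in\AC\bigl(\{x_n\}\bigr)$. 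Uniqueness follows by applying the strict form of the same inequality to any two points of $\AC\bigl(\{x_n\}\bigr)$: the midpoint would otherwise realize a value strictly below $r_0$, contradicting the definition of $r_0$.

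\textbf{Extraction of a $\Delta$-convergent subsequence, and the main obstacle.} Every subsequence of $\{x_n\}$ is still spherically bounded, so by what was just proved each has a singleton asymptotic center. To upgrade to $\Delta$-convergence I would first extract a \emph{regular} subsequence $\{x_{n_k}\}$—one for which every sub-subsequence has the same asymptotic radius—via a standard iterated selection over a countable dense family of test points. Regularity together with the uniqueness just proved forces the asymptotic centers of all sub-subsequences to coincide with $\AC\bigl(\{x_{n_k}\}\bigr)$, which is precisely $\Delta$-convergence. The hard step is the spherical comparison inequality of the first paragraph and, in particular, verifying that the perimeter bound \eqref{eq:perimeter} is satisfied for the triangles one needs to compare to $\S^2$; spherical boundedness together with the $1$-Lipschitz property of $r$ is exactly what is needed to keep all relevant distances under the $\pi/2$ threshold, so the remaining work is triangle-inequality bookkeeping rather than any conceptual novelty.
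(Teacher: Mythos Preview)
The paper does not give its own proof of this lemma; it is quoted from Esp\'{\i}nola and Fern\'andez-Le\'on~\cite{MR2508878}, so there is no in-paper argument to compare against. Evaluating your sketch on its own merits: the first half, on existence and uniqueness of the asymptotic center, is correct and is essentially the standard argument. The midpoint inequality you wrote is exactly the $\CAT(1)$ comparison estimate (it is an identity on $\S^2$, and the inequality in $X$ follows from $d(m,z)\le \rho_{\S^2}(\bar m,\bar z)$), and once $r_0<\pi/2$ keeps all the relevant cosines positive, it furnishes the uniform convexity that makes a minimizing sequence for $r$ Cauchy and rules out two distinct minimizers. Your remark that the perimeter bookkeeping goes through because near-minimizers satisfy $\limsup_n d(x_n,\,\cdot\,)<\pi/2$ is also correct.

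The second half has a gap. Your mechanism for producing a ``regular'' subsequence---``iterated selection over a countable dense family of test points''---tacitly assumes that $X$, or at least a convex region containing the sequence and all candidate centers, is separable; no such hypothesis is available, and a complete $\CAT(1)$ space need not be separable. The argument that works without separability (and is the one used in~\cite{MR2508878}) is different: set $\rho$ equal to the infimum of the asymptotic radii over \emph{all} subsequences of $\{x_n\}$, and build by a diagonal procedure a subsequence $\{x_{n_k}\}$ whose asymptotic radius is exactly $\rho$. Every further subsequence of $\{x_{n_k}\}$ then has asymptotic radius at most $\rho$ (passing to a subsequence cannot increase the $\limsup$ against any fixed $y$) and at least $\rho$ (by the definition of $\rho$), hence exactly $\rho$; at that point your final sentence applies verbatim, and uniqueness of asymptotic centers forces every sub-subsequence center to coincide with $\AC\bigl(\{x_{n_k}\}\bigr)$. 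So the architecture you propose---manufacture a regular subsequence, then invoke uniqueness---is right, but the device for manufacturing it must be this radius-minimizing diagonalization rather than a density argument.
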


\begin{lemma}[\cite{MR3213144}*{Proposition~3.1}]
\label{lem:KSY} 
 Let $X$ be a complete $\CAT(1)$ space and 
 $\{x_n\}$ a spherically bounded sequence in $X$ 
 such that $\{d(x_n, z)\}$ is convergent 
 for each $z$ in $\omega_{\Delta}\bigl(\{x_n\}\bigr)$. 
 Then $\{x_n\}$ is $\Delta$-convergent 
 to an element of $X$.  
\end{lemma}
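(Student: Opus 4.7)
The plan is to show first that $\omega_{\Delta}\bigl(\{x_n\}\bigr)$ reduces to a single point $p$, and then to promote this singleton property to $\Delta$-convergence of the full sequence. Note at the outset that every subsequence of the spherically bounded sequence $\{x_n\}$ is itself spherically bounded, so Lemma~\ref{lem:E-FL} is applicable on any subsequence; in particular $\omega_{\Delta}\bigl(\{x_n\}\bigr)$ is nonempty.

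For the uniqueness step, I would take $z_1, z_2 \in \omega_{\Delta}\bigl(\{x_n\}\bigr)$ and let $a_j$ denote the limit of $\{d(x_n, z_j)\}$ guaranteed by the hypothesis. Choose a subsequence $\{x_{n_i}\}$ that is $\Delta$-convergent to $z_1$; by the definition of $\Delta$-convergence applied with the subsequence equal to itself, $\AC\bigl(\{x_{n_i}\}\bigr) = \{z_1\}$, so $z_1$ is the \emph{unique} minimizer of $y \mapsto \limsup_i d(x_{n_i}, y)$. Since $\limsup_i d(x_{n_i}, z_j) = a_j$ for both $j$, this forces $a_1 \leq a_2$, with equality only if $z_1 = z_2$. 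A symmetric argument using a subsequence $\Delta$-convergent to $z_2$ reverses the inequality, so $z_1 = z_2$. Write $\omega_{\Delta}\bigl(\{x_n\}\bigr) = \{p\}$.

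To deduce $\Delta$-convergence, I would fix an arbitrary subsequence $\{x_{n_i}\}$ and verify $\AC\bigl(\{x_{n_i}\}\bigr) = \{p\}$. Applying Lemma~\ref{lem:E-FL} to $\{x_{n_i}\}$ yields a singleton asymptotic center $\{q\}$ together with a further subsequence $\{x_{n_{i_k}}\}$ that is $\Delta$-convergent to some $r \in X$. The uniqueness step forces $r = p$, so $\AC\bigl(\{x_{n_{i_k}}\}\bigr) = \{p\}$; hence, writing $a_p := \lim_n d(x_n, p)$, we have $a_p = \limsup_k d(x_{n_{i_k}}, p) \leq \limsup_k d(x_{n_{i_k}}, q) \leq \limsup_i d(x_{n_i}, q)$. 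Combined with the defining property $\limsup_i d(x_{n_i}, q) \leq \limsup_i d(x_{n_i}, p) = a_p$ of $q$, this squeezes $\limsup_i d(x_{n_i}, q) = \limsup_i d(x_{n_i}, p)$, so $p$ also minimizes the limsup along $\{x_{n_i}\}$; by the singleton structure of $\AC\bigl(\{x_{n_i}\}\bigr)$, we conclude $q = p$.

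The main obstacle I anticipate is cleanly exploiting the convergence hypothesis at exactly the two places where it is needed — to convert limsup comparisons along $\Delta$-convergent subsequences into honest equalities between the full-sequence limits $a_j$ — while coordinating the repeated passage between sequence, subsequence, and sub-subsequence asymptotic centers without notational confusion. Beyond that, the argument is routine bookkeeping built on Lemma~\ref{lem:E-FL} and the definitions of asymptotic center and $\Delta$-convergence.
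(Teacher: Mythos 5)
Your proof is correct. Note that the paper does not prove this lemma at all --- it is imported verbatim from \cite{MR3213144}*{Proposition~3.1} --- so there is no internal argument to compare against; your two-stage argument (first using the convergence hypothesis together with the uniqueness of asymptotic centers from Lemma~\ref{lem:E-FL} to collapse $\omega_{\Delta}\bigl(\{x_n\}\bigr)$ to a single point $p$, then the sub-subsequence squeeze to identify the asymptotic center of every subsequence with $p$) is sound and is essentially the standard proof given in that reference.
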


\begin{lemma}[See, for instance,~\cite{MR3463526}*{Lemma~2.3}]
\label{lem:CAT1-CMS}
 Let $X$ be a $\CAT(1)$ space and 
 $x_1, x_2, x_3$ points in $X$ such that~\eqref{eq:perimeter} holds. 
 If $d(x_1,x_3) \leq \pi/2$, $d(x_2,x_3)\leq \pi/2$, and $\alpha \in [0,1]$, then 
 \begin{align*}
  \cos d\bigl(\alpha x_1 \oplus (1-\alpha) x_2, x_3\bigr) 
  \geq \alpha \cos d(x_1, x_3) + (1-\alpha) \cos d(x_2,x_3). 
 \end{align*}
\end{lemma}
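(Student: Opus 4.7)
The plan is to transfer the problem to the model sphere $\S^2$ via the $\CAT(1)$ comparison inequality and then establish the analogous inequality on $\S^2$ by a direct computation in the ambient $\R^3$. By~\eqref{eq:perimeter}, there exists a comparison triangle $\bar\Delta$ in $\S^2$ with vertices $\bar x_1,\bar x_2,\bar x_3$. Setting $y=\alpha x_1\oplus(1-\alpha)x_2$ and denoting by $\bar y\in[\bar x_1,\bar x_2]$ its comparison point, the $\CAT(1)$ condition gives $d(y,x_3)\leq\rho_{\S^2}(\bar y,\bar x_3)$. Since cosine is decreasing on $[0,\pi]$ and both distances lie in that interval, it suffices to prove
$$\cos\rho_{\S^2}(\bar y,\bar x_3)\geq \alpha\cos\rho_{\S^2}(\bar x_1,\bar x_3)+(1-\alpha)\cos\rho_{\S^2}(\bar x_2,\bar x_3).$$

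Next I would verify that $L:=\rho_{\S^2}(\bar x_1,\bar x_2)=d(x_1,x_2)$ is strictly less than $\pi$, so that the geodesic in $\S^2$ from $\bar x_1$ to $\bar x_2$ is unique and admits a standard parametrization. The triangle inequality together with $d(x_i,x_3)\leq\pi/2$ yields $d(x_1,x_2)\leq\pi$, and equality would force $d(x_1,x_3)=d(x_2,x_3)=\pi/2$ and hence a perimeter of exactly $2\pi$, contradicting~\eqref{eq:perimeter}. Viewing $\bar x_1,\bar x_2,\bar x_3$ as unit vectors in $\R^3$, the geodesic in question is given by
$$c(s)=\frac{\sin(L-s)\,\bar x_1+\sin(s)\,\bar x_2}{\sin L},\qquad s\in[0,L],$$
so that $\bar y=c((1-\alpha)L)$. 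Using $\cos\rho_{\S^2}(\bar u,\bar v)=\ip{\bar u}{\bar v}$ and taking the inner product with $\bar x_3$ yields the key identity
$$\cos\rho_{\S^2}(\bar y,\bar x_3)=\frac{\sin(\alpha L)}{\sin L}\cos\rho_{\S^2}(\bar x_1,\bar x_3)+\frac{\sin((1-\alpha)L)}{\sin L}\cos\rho_{\S^2}(\bar x_2,\bar x_3).$$

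To conclude, I would invoke the concavity of $\sin$ on $[0,\pi]$ to obtain $\sin(\beta L)\geq\beta\sin L$ for every $\beta\in[0,1]$. The hypothesis $d(x_i,x_3)\leq\pi/2$ ensures $\cos\rho_{\S^2}(\bar x_i,\bar x_3)\geq 0$, so the concavity estimates $\sin(\alpha L)/\sin L\geq\alpha$ and $\sin((1-\alpha)L)/\sin L\geq 1-\alpha$ may be multiplied through without reversing the inequality; substituting into the preceding identity delivers the spherical inequality and hence the lemma. The main obstacle is precisely this sign condition on the coefficients: without the hypothesis $d(x_i,x_3)\leq\pi/2$ the relevant cosines could be negative and the concavity of $\sin$ would yield an inequality in the wrong direction, so this hypothesis is built into the statement in an essential way.
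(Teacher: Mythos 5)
The paper does not prove this lemma itself but cites it from \cite{MR3463526}*{Lemma~2.3}; your argument is the standard comparison proof of that result and is correct: pass to a comparison triangle in $\S^2$, use the explicit great-circle parametrization to get the exact identity for $\cos\rho_{\S^2}(\bar y,\bar x_3)$, and then apply the concavity of $\sin$ on $[0,\pi]$, with the hypothesis $d(x_i,x_3)\leq\pi/2$ guaranteeing the cosines are nonnegative so the estimate does not reverse. The only point worth a word is the degenerate case $L=0$ (i.e.\ $x_1=x_2$), where $\sin L=0$ and the parametrization is undefined but the asserted inequality holds trivially with equality; with that remark added, the proof is complete.
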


Let $X$ be an admissible $\CAT(1)$ space and 
$f$ a function of $X$ into $(-\infty, \infty]$.  
Then $f$ is said to be proper if 
$f(a)\in \R$ for some $a\in X$. 
It is also said to be convex if 
\begin{align*}
 f\bigl(\alpha x \oplus (1-\alpha) y\bigr) 
 \leq \alpha f(x) + (1-\alpha) f(y)
\end{align*}
whenever $x,y\in X$ and $\alpha \in (0,1)$. 
If $C$ is a nonempty closed convex subset of $X$, 
then the indicator function $i_C$ for $C$, 
which is defined by $i_C(x)=0$ if $x\in C$ and $\infty$ if $x\in X\setminus C$, 
is a proper lower semicontinuous convex function 
of $X$ into $(-\infty, \infty]$. 
A function $g$ of $X$ into $[-\infty, \infty)$ is 
said to be concave if $-g$ is convex. 
See~\cites{MR1113394, Yokota-JMSJ16} 
on some examples of convex functions in 
$\CAT(1)$ spaces. 

It is known~\cite{MR3463526}*{Theorem~4.2} that 
if $X$ is an admissible complete $\CAT(1)$ space, 
 $f$ is a proper lower semicontinuous 
 convex function of $X$ into $(-\infty, \infty]$, 
 and $x\in X$, 
 then there exists a unique $\hat{x}\in X$ such that 
 \begin{align*}
   f(\hat{x}) + \tan d(\hat{x}, x) \sin d(\hat{x}, x) 
   = \inf_{y\in X} \bigl\{f(y) + \tan d(y, x) \sin d(y, x)\bigr\}. 
 \end{align*}
Following~\cite{MR3463526}*{Definition~4.3}, 
we define the resolvent $R_{f}$ of $f$ by 
$R_fx = \hat{x}$ for all $x\in X$. 
In other words, $R_f$ is given by~\eqref{eq:R_f-intro} for all $x\in X$. 
The resolvent of the indicator function $i_C$ for 
a nonempty closed convex subset $C$ of $X$ 
coincides with the metric projection $P_C$ of $X$ onto $C$, i.e., 
\begin{align*}
\begin{split}
 R_{i_C}(x)
&=\Argmin_{y\in X} 
 \bigl\{i_C(y) + \tan d(y, x)\sin d(y, x)\bigr\} \\
&=\Argmin_{y\in C} \tan d(y, x)\sin d(y, x) 
 =\Argmin_{y\in C} d(y, x) = P_Cx   
\end{split}
\end{align*}
for all $x\in X$. 

We recently obtained the following maximization theorem. 

\begin{theorem}[\cite{KimuraKohsaka-PPA-CAT1}*{Theorem~4.1}]
\label{thm:argmax}
 Let $X$ be an admissible complete $\CAT(1)$ space, 
 $\{z_n\}$ a spherically bounded sequence in $X$, 
 $\{\beta_n\}$ a sequence of positive real numbers 
 such that $\sum_{n=1}^{\infty}\beta_n =\infty$, 
 and $g$ the real function on $X$ defined by 
 \begin{align}
  g(y) = \liminf_{n\to \infty} \frac{1}{\sum_{l=1}^{n}\beta_l} 
  \sum_{k=1}^{n} \beta_k \cos d(y, z_k)
 \end{align} 
 for all $y\in X$. 
 Then $g$ is a concave and nonexpansive function 
 of $X$ into $[0,1]$ and $g$ has a unique maximizer. 
\end{theorem}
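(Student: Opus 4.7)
For convenience write $S_n = \sum_{l=1}^{n}\beta_l$ and $a_n(y) = S_n^{-1} \sum_{k=1}^{n} \beta_k \cos d(y, z_k)$, so that $g(y) = \liminf_n a_n(y)$. The plan is to verify the four assertions (range $[0,1]$, nonexpansiveness, concavity, and existence and uniqueness of a maximizer) in turn.

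The first three are essentially routine. By admissibility $d(y, z_k) < \pi/2$, so $\cos d(y, z_k) \in (0, 1]$; hence each $a_n$ lies in $(0, 1]$ and $g$ in $[0, 1]$. Nonexpansiveness follows because cosine and $d(\cdot, z_k)$ are both $1$-Lipschitz, giving $|a_n(y) - a_n(y')| \leq d(y, y')$ for every $n$, and this is preserved when passing to $\liminf$. For concavity, Lemma~\ref{lem:CAT1-CMS} yields concavity along geodesics of each $\cos d(\cdot, z_k)$; this survives the convex combination and then the $\liminf$ via $\liminf(\alpha u_n + (1-\alpha)v_n) \geq \alpha \liminf u_n + (1-\alpha) \liminf v_n$.

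The substantive task is existence and uniqueness of a maximizer. I would first derive the following refinement of Lemma~\ref{lem:CAT1-CMS}: for all $p, q, z \in X$, the midpoint $m = (1/2) p \oplus (1/2) q$ satisfies
\begin{equation*}
 \cos d(m, z) \; \geq \; \frac{\cos d(p, z) + \cos d(q, z)}{2 \cos \bigl( d(p, q)/2 \bigr)}.
\end{equation*}
This is obtained by applying the spherical median identity $\cos a + \cos b = 2 \cos m_c \cos(c/2)$ inside a comparison triangle in $\S^2$ and invoking CAT(1) comparison; all denominators are positive because $d(p, q) < \pi/2$ by admissibility. Averaging over $k$ and taking $\liminf$ upgrades this to
\begin{equation*}
 g(m) \; \geq \; \frac{g(p) + g(q)}{2 \cos \bigl( d(p, q)/2 \bigr)}.
\end{equation*}

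Set $M := \sup_{y \in X} g(y)$. The spherical boundedness of $\{z_n\}$ together with Lemma~\ref{lem:E-FL} produces a unique asymptotic center $p_0 = \AC(\{z_n\})$ with $\rho := \limsup_n d(z_n, p_0) < \pi/2$; a Ces\`aro-type estimate exploiting $\sum \beta_l = \infty$ then gives $g(p_0) \geq \cos \rho > 0$, so $M > 0$. For a maximizing sequence $\{y_m\}$ with $g(y_m) \to M$, the refined inequality applied to $p = y_m$, $q = y_l$ yields $\cos(d(y_m, y_l)/2) \geq (g(y_m) + g(y_l))/(2M) \to 1$ as $m, l \to \infty$, so $\{y_m\}$ is Cauchy; by completeness it converges to some $p_* \in X$, and nonexpansive continuity gives $g(p_*) = M$. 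Uniqueness follows by the same estimate: if $p \neq q$ both attain $M$, the midpoint would satisfy $g(m) \geq M/\cos(d(p, q)/2) > M$, a contradiction. The main obstacle is the strengthened midpoint inequality, which is not formulated as a lemma in the excerpt and requires an ad hoc spherical-trigonometry computation; once that is in hand, both existence (a Cauchy maximizing sequence) and uniqueness are clean consequences of $M > 0$.
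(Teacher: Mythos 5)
The paper does not prove Theorem~\ref{thm:argmax}; it imports it from \cite{KimuraKohsaka-PPA-CAT1}, so there is no in-paper proof to compare against. Your argument is nevertheless correct and follows the standard route: the range, nonexpansiveness, and concavity checks are routine as you say, and the strengthened midpoint inequality $2\cos\bigl(d(p,q)/2\bigr)\cos d(m,z)\geq\cos d(p,z)+\cos d(q,z)$ is genuine (it is an exact identity on $\S^2$, since the spherical midpoint of $\bar p,\bar q$ is $(\bar p+\bar q)/\lVert\bar p+\bar q\rVert$ with $\lVert\bar p+\bar q\rVert=2\cos(\rho_{\S^2}(\bar p,\bar q)/2)$, and $\CAT(1)$ comparison converts it to an inequality), after which the Cauchy maximizing sequence and the uniqueness both follow exactly as you describe; the one step that genuinely needs the hypotheses $\sum_n\beta_n=\infty$ and spherical boundedness is the positivity of $\sup_X g$, and your Ces\`aro estimate supplies it correctly.
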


We say that a real function $f$ on a nonempty subset $I$ of $\R$ is 
nondecreasing if 
$f(s_1)\leq f(s_2)$ whenever $s_1, s_2\in I$ and $s_1\leq s_2$. 
We also say that $f$ is nonincreasing if $-f$ is nondecreasing. 
It is clear that if 
$A$ is a nonempty bounded subset of $\R$, 
$I$ is a closed subset of $\R$ which contains $A$, 
and $f$ is a continuous and nondecreasing real function on $I$, 
then $f(\sup A) =\sup f(A)$ and $f(\inf A) =\inf f(A)$. 
In fact, setting $\alpha =\sup A$, we have 
$\alpha \in I$ by the closedness of $I$. 
Since $s\leq \alpha$ for all $s\in A$ 
and $f$ is nondecreasing, we have 
$f(s)\leq f(\alpha)$ for all $s\in A$. 
Thus we obtain $\sup f(A)\leq f(\alpha)$. 
On the other hand, the definition of $\alpha$ 
implies that there exists a 
sequence $\{s_n\}$ in $A$ converging to $\alpha$. 
Since $f$ is continuous, we have 
$f(\alpha) = \lim_{n} f(s_n) \leq \sup f(A)$. 
Thus we have $f(\alpha)=\sup f(A)$. 
The second equality can be shown similarly. 
Using these two equalities, we can show the following.   

\begin{lemma}\label{lem:limsup-liminf}
 Let $I$ be a nonempty closed subset of $\R$, 
 $\{t_n\}$ a bounded sequence in $I$, 
 and $f$ a continuous real function on $I$. 
 Then the following hold.   
 \begin{enumerate}
  \item[(i)] If $f$ is nondecreasing, 
    then $f(\limsup_n t_n)= \limsup_n f(t_n)$; 
  \item[(ii)] if $f$ is nonincreasing, 
    then $f(\limsup_n t_n)= \liminf_n f(t_n)$.  
 \end{enumerate}
\end{lemma}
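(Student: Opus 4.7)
The plan is to reduce the lemma to the two observations about monotone continuous functions commuting with $\sup$ and $\inf$ that the author has just proved in the paragraph preceding the statement. The point is simply that $\limsup$ and $\liminf$ are themselves limits of suprema (resp. infima) of tail sets, so the preceding observations can be iterated one level up.

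For (i), define the tails $A_n = \{t_k : k \geq n\} \subseteq I$ and set $\alpha_n = \sup A_n$. Since $\{t_n\}$ is bounded, each $A_n$ is nonempty and bounded, so $\alpha_n \in \R$; because $I$ is closed and $A_n \subseteq I$, we also have $\alpha_n \in I$. By definition $\limsup_n t_n = \inf_n \alpha_n = \lim_n \alpha_n$, and again closedness of $I$ gives $\limsup_n t_n \in I$, so $f$ is well defined at this point. The cited $\sup$-commutation property applied to the nondecreasing continuous $f$ on $I$ and the bounded set $A_n$ yields
\begin{equation*}
 f(\alpha_n) \;=\; f(\sup A_n) \;=\; \sup f(A_n) \;=\; \sup_{k\geq n} f(t_k).
\end{equation*}
Continuity of $f$ on $I$ together with $\alpha_n \to \limsup_n t_n$ then gives $f(\alpha_n) \to f(\limsup_n t_n)$. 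On the other hand, since $\{t_n\}$ is bounded and $f$ is continuous on the closed subset $I$, the sequence $\{f(t_n)\}$ is bounded (the closure of $\{t_n\}$ in $I$ is compact), and the $\sup_{k\geq n}$ form a nonincreasing sequence whose limit is exactly $\limsup_n f(t_n)$. Combining, $f(\limsup_n t_n) = \limsup_n f(t_n)$, as required.

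For (ii), apply (i) to the nondecreasing continuous function $-f$ on $I$: this gives
\begin{equation*}
 -f\bigl(\limsup_n t_n\bigr)
 \;=\; \limsup_n \bigl(-f(t_n)\bigr)
 \;=\; -\liminf_n f(t_n),
\end{equation*}
and the desired equality follows by multiplying by $-1$.

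There is no real obstacle here; the only points that require a moment of care are (a) verifying that the tail suprema $\alpha_n$ and the value $\limsup_n t_n$ both belong to $I$ so that $f$ may be evaluated at them — this is exactly where closedness of $I$ is used — and (b) ensuring that $\{f(t_n)\}$ is bounded so that $\limsup_n f(t_n)$ is a genuine real number, which follows from continuity of $f$ on the compact closure of $\{t_n\}$ inside $I$. The rest is a direct invocation of the preceding paragraph.
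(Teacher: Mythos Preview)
Your proof is correct and follows essentially the route the paper intends: the paper does not give a separate proof but only records the two identities $f(\sup A)=\sup f(A)$ and $f(\inf A)=\inf f(A)$ for continuous nondecreasing $f$ and says the lemma follows. Your argument fills in these details in the expected way, applying the $\sup$-identity to each tail set $A_n$ and then passing to the limit; the only cosmetic difference is that the paper's hint suggests applying the $\inf$-identity to the set $\{\alpha_n:n\in\N\}$ in place of your direct appeal to continuity of $f$ at $\lim_n\alpha_n$, but since the $\inf$-identity was itself proved via continuity this is the same step.
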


\section{Vicinal mappings and firmly vicinal mappings}
\label{sec:lem}

In this section, 
motivated by the fact that the resolvent $R_{f}$ 
defined by~\eqref{eq:R_f-intro} satisfies the
inequality~\eqref{eq:firmly-vicinal-intro}, 
we give the definition of vicinal mappings 
and firmly vicinal mappings. 
We also study some fundamental properties of these mappings. 

Let $X$ be a metric space 
such that $d(v,v')\leq \pi/2$ for all $v,v'\in X$, 
$T$ a mapping of $X$ into itself, 
and $C_z$ the real number given by 
\begin{align}\label{eq:C_z}
 C_z=\cos d(Tz,z)
\end{align}
for all $z\in X$. 

The mapping $T$ is said to be 
\begin{itemize}
 \item vicinal if 
\begin{align}
 \begin{split}\label{eq:vicinal}
  & \bigl(C_x^2 (1+C_y^2)+C_y^2(1+C_x^2)\bigr)
  \cos d(Tx,Ty) \\
  &\quad \geq C_x^2 (1+C_y^2) \cos d(Tx,y) 
 +C_y^2 (1+C_x^2) \cos d(Ty,x) 
 \end{split}
\end{align}
for all $x,y\in X$; 
 \item firmly vicinal if 
\begin{align}
 \begin{split}\label{eq:firmly-vicinal}
  & \bigl(C_x^2(1+C_y^2)C_y+C_y^2(1+C_x^2)C_x\bigr)
  \cos d(Tx,Ty) \\
  &\quad \geq C_x^2 (1+C_y^2) \cos d(Tx,y) 
 +C_y^2(1+C_x^2) \cos d(Ty,x) 
 \end{split}
\end{align}
for all $x,y\in X$.  
\end{itemize}
Recall that $T$ is said to be 
\begin{itemize}
 \item spherically nonspreading~\cite{MR3463526} if 
\begin{align*}
  \cos^2 d(Tx,Ty) 
  \geq \cos d(Tx,y)\cos d(Ty,x) 
\end{align*}
for all $x,y\in X$; 
 \item firmly spherically nonspreading~\cite{MR3463526} if 
\begin{align*}
 (C_x + C_y) \cos^2 d(Tx,Ty) 
  \geq 2\cos d(Tx,y)\cos d(Ty,x) 
\end{align*}
for all $x,y\in X$; 
 \item quasi-nonexpansive if $\Fix(T)$ is nonempty 
and $d(Tx,y)\leq d(x,y)$ 
for all $x\in X$ and $y\in \Fix(T)$. 
\end{itemize}

Since $C_z\leq 1$ for all $z\in X$, 
every firmly spherically nonspreading mapping 
is spherically nonspreading. We know the following result. 
\begin{lemma}[\cite{MR3463526}*{Theorem~4.6}]
\label{lem:resolvent}
 Let $X$ be an admissible complete $\CAT(1)$ space, 
 $f$ a proper lower semicontinuous convex function 
 of $X$ into $(-\infty, \infty]$, and $R_f$ the resolvent of $f$. 
 Then $R_f$ is a firmly vicinal mapping of $X$ into itself 
 such that $\Fix(R_f)$ coincides with $\Argmin_X f$. 
\end{lemma}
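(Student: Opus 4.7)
The plan splits into showing $\Fix(R_f) = \Argmin_X f$ and showing~\eqref{eq:firmly-vicinal} holds with $T = R_f$. For the former, if $p \in \Argmin_X f$, then $f(p) = f(p) + \tan d(p,p)\sin d(p,p) \leq f(y) + \tan d(y,p)\sin d(y,p)$ for every $y\in X$, so the uniqueness of the minimizer defining $R_f p$ forces $R_f p = p$. Conversely, if $R_f p = p$, I would test the variational inequality $f(p) \leq f(z_t) + \tan d(z_t, p)\sin d(z_t, p)$ at $z_t := (1-t) p \oplus t z$ for arbitrary $z\in X$ and small $t > 0$; convexity of $f$ together with $d(z_t, p) = t\, d(z, p)$ gives $f(p) \leq (1-t) f(p) + t f(z) + \tan\bigl(t\,d(z,p)\bigr)\sin\bigl(t\,d(z,p)\bigr)$, and dividing by $t$ and letting $t\to 0^+$ (using $\tan s \sin s = O(s^2)$ as $s\to 0^+$) yields $f(p) \leq f(z)$.

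For the firmly vicinal property, fix $x,y\in X$ and set $a = R_f x$, $b = R_f y$, $\theta = d(a,b)$, $C_x = \cos d(a,x)$, $C_y = \cos d(b,y)$. The inequality is trivial when $a = b$, so assume $\theta > 0$. The idea is to derive a right-sided first-order inequality along the geodesic $w_\beta := (1-\beta) a \oplus \beta b$. The essential tool, sharper than Lemma~\ref{lem:CAT1-CMS}, is the $\S^2$-comparison inequality
\[
 \cos d(w_\beta, x)\,\sin\theta \;\geq\; \sin\bigl((1-\beta)\theta\bigr)\,C_x + \sin(\beta\theta)\,\cos d(b, x),
\]
obtained by comparing $d(w_\beta,x)$ with the distance in the comparison triangle for $a, b, x$ in $\S^2$ (a valid comparison triangle since the perimeter is less than $3\pi/2 < 2\pi$ by admissibility), and then applying the spherical law of cosines. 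Rewriting $\tan d \sin d = \sec d - \cos d$, and substituting this bound together with the convexity bound $f(w_\beta) \leq (1-\beta) f(a) + \beta f(b)$ into the variational inequality $f(a) + \sec d(a,x) - \cos d(a,x) \leq f(w_\beta) + \sec d(w_\beta, x) - \cos d(w_\beta, x)$, one obtains $\beta[f(a) - f(b)] \leq G_x(\beta) - G_x(0)$ for an explicit differentiable function $G_x$ of $\beta$. Dividing by $\beta$ and passing to the $\limsup$ as $\beta \to 0^+$ (using the continuity $\cos d(w_\beta,x)\to C_x$) produces
\[
 f(a) - f(b) \;\leq\; \frac{\theta}{\sin\theta}\cdot\frac{(1+C_x^2)\bigl[C_x\cos\theta - \cos d(b,x)\bigr]}{C_x^2},
\]
and the symmetric inequality with $x$ and $y$ swapped follows by the same argument. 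Adding the two inequalities cancels the $f(a) - f(b)$ terms; multiplying through by the positive factor $\sin\theta \cdot C_x^2 C_y^2/\theta$ and regrouping produces exactly~\eqref{eq:firmly-vicinal} with $T = R_f$.

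The main obstacle will be the right-derivative step, since $\beta\mapsto d(w_\beta, x)$ need not be differentiable; I would handle this by working with $\limsup$/$\liminf$ estimates applied to the explicit smooth upper bound $G_x$, so that the non-smoothness of the distance is absorbed into the inequality rather than requiring a pointwise derivative. A secondary but crucial subtlety is that one genuinely needs the sharper $\S^2$-comparison inequality above rather than merely the linear interpolation of Lemma~\ref{lem:CAT1-CMS}: repeating the derivative manipulation with the weaker linear bound yields only~\eqref{eq:firmly-vicinal} with the factor $\cos d(R_f x, R_f y)$ on the left-hand side replaced by $1$, which is strictly weaker than the asserted firmly vicinal inequality.
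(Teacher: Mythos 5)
The paper gives no proof of this lemma: it is quoted directly from \cite{MR3463526}*{Theorem~4.6}, so there is nothing internal to compare against. Your argument is a correct reconstruction of the proof in that reference: the characterization $\Fix(R_f)=\Argmin_X f$ via the variational inequality tested along geodesics emanating from $p$, and the firm vicinality via a first-variation argument at $\beta=0^+$ along the geodesic joining $R_fx$ and $R_fy$, where you correctly identify that one must use the sharp $\S^2$-comparison bound $\cos d(w_\beta,x)\sin\theta\geq\sin((1-\beta)\theta)C_x+\sin(\beta\theta)\cos d(R_fy,x)$ rather than the linear interpolation of Lemma~\ref{lem:CAT1-CMS}, which would indeed only give~\eqref{eq:firmly-vicinal} with $\cos d(R_fx,R_fy)$ replaced by $1$. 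The one step worth making explicit is that substituting the lower bound $L(\beta)$ for $\cos d(w_\beta,x)$ into $\tan d\sin d=1/\cos d-\cos d$ yields an upper bound because $c\mapsto 1/c-c$ is decreasing on $\left(0,1\right]$ and $L(\beta)>0$ by admissibility; with that observation, and noting $f(R_fx),f(R_fy)<\infty$ so the difference $f(R_fx)-f(R_fy)$ is well defined, the computation closes as you describe.
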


We first show the following fundamental lemma. 

\begin{lemma}\label{lem:vicinal-relation}
 Let $X$ be a metric space such that $d(v,v')\leq \pi/2$ 
 for all $v,v'\in X$ and 
 $T$ a mapping of $X$ into itself. 
 Then the following hold.  
 \begin{enumerate}
  \item[(i)] Suppose that $T$ is firmly vicinal. 
 Then $T$ is vicinal. 
 Further, if $d(v,v')<\pi/2$ for all $v,v'\in X$, 
 then $T$ is firmly spherically nonspreading; 
  \item[(ii)] if $T$ is firmly vicinal and $\Fix(T)$ is nonempty, 
 then 
 \begin{align*}
   \cos d(Tx,x) \cos d(Tx,y) \geq \cos d(x,y)
 \end{align*}
 for all $x\in X$ and $y\in \Fix(T)$; 
  \item[(iii)] if $T$ is vicinal and $\Fix(T)$ is nonempty, 
 then it is quasi-nonexpansive; 
  \item[(iv)] if $d(v,v')<\pi/2$ for all $v,v'\in X$, 
 $T$ is firmly vicinal, and $\Fix(T)$ is nonempty, 
 then $T$ is asymptotically regular. 
  \end{enumerate}
\end{lemma}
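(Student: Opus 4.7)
The plan is to prove the four items in order, in each case substituting into the defining inequalities \eqref{eq:vicinal} and \eqref{eq:firmly-vicinal} and applying elementary trigonometric and algebraic estimates. I will use throughout that $C_z = \cos d(Tz,z) \in [0,1]$, that $\cos d(Tx,Ty) \geq 0$, and that $\cos$ is nonnegative and strictly decreasing on $[0,\pi/2]$.

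For the first half of (i) (firmly vicinal implies vicinal), I observe that the right-hand sides of \eqref{eq:vicinal} and \eqref{eq:firmly-vicinal} coincide, while the left-hand side of \eqref{eq:vicinal} exceeds that of \eqref{eq:firmly-vicinal} by
\[
\bigl(C_x^2(1+C_y^2)(1-C_y) + C_y^2(1+C_x^2)(1-C_x)\bigr)\cos d(Tx,Ty),
\]
which is nonnegative since $C_x,C_y \in [0,1]$ and $\cos d(Tx,Ty) \geq 0$. For the second half (under strict admissibility), I would first record the factorisation
\[
C_x^2(1+C_y^2)C_y + C_y^2(1+C_x^2)C_x = C_xC_y(C_x+C_y)(1+C_xC_y),
\]
which is strictly positive. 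Squaring \eqref{eq:firmly-vicinal} and applying $(a\alpha+b\beta)^2\geq 4ab\alpha\beta$ with $a=C_x^2(1+C_y^2)$, $b=C_y^2(1+C_x^2)$, $\alpha=\cos d(Tx,y)$ and $\beta=\cos d(Ty,x)$, then dividing by the common positive factor $C_x^2C_y^2$, yields
\[
(C_x+C_y)^2(1+C_xC_y)^2\cos^2 d(Tx,Ty) \geq 4(1+C_x^2)(1+C_y^2)\cos d(Tx,y)\cos d(Ty,x).
\]
The firmly spherically nonspreading inequality then follows from $(C_x+C_y)(1+C_xC_y)^2\leq 2(1+C_x^2)(1+C_y^2)$, which combines the Cauchy--Schwarz bound $(1+C_xC_y)^2\leq (1+C_x^2)(1+C_y^2)$ with $C_x+C_y\leq 2$.

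For (ii) and (iii), I take $y\in\Fix(T)$, so that $Ty=y$ and $C_y=1$. In (ii), the firmly vicinal inequality simplifies, after collecting terms and dividing by the positive factor $1+C_x^2$, to $C_x\cos d(Tx,y)\geq\cos d(x,y)$, which is precisely the stated inequality. In (iii), the vicinal inequality collapses analogously to $(1+C_x^2)\cos d(Tx,y)\geq (1+C_x^2)\cos d(x,y)$, and monotonicity of $\cos$ on $[0,\pi/2]$ then gives $d(Tx,y)\leq d(x,y)$.

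For (iv), I fix $x\in X$ and $y\in\Fix(T)$ and set $x_n=T^nx$. By (i) and (iii), $T$ is quasi-nonexpansive, so $\{d(x_n,y)\}$ is nonincreasing and converges to some $\ell\in[0,d(x,y)]\subseteq[0,\pi/2)$; the strict admissibility enters here to guarantee $\cos\ell>0$. Applying (ii) with $x_n$ in place of $x$ gives
\[
\cos d(x_{n+1},x_n)\geq\frac{\cos d(x_n,y)}{\cos d(x_{n+1},y)},
\]
whose right-hand side tends to $1$; together with $\cos d(x_{n+1},x_n)\leq 1$ this forces $d(T^{n+1}x,T^nx)\to 0$. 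The main obstacle in the plan is the second half of (i), where one must correctly combine the factorisation of $aC_y+bC_x$, the AM--GM estimate, and Cauchy--Schwarz in the right order; the remaining items reduce by direct substitution to one-line computations.
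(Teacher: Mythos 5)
Your proposal is correct and follows essentially the same route as the paper's: direct substitution with $C_y=1$ for (ii) and (iii), the ratio argument via (ii) for (iv), and for the second half of (i) the factorisation $C_x^2(1+C_y^2)C_y+C_y^2(1+C_x^2)C_x=C_xC_y(C_x+C_y)(1+C_xC_y)$ combined with AM--GM and $C_x+C_y\le 2$. The only cosmetic difference is in (i): the paper applies AM--GM separately to $C_x^2\cos d(Tx,y)+C_y^2\cos d(Ty,x)$ and to $C_x^2C_y^2\bigl(\cos d(Tx,y)+\cos d(Ty,x)\bigr)$, which produces the factor $1+C_xC_y$ directly, whereas your single global AM--GM yields $\sqrt{(1+C_x^2)(1+C_y^2)}$ and therefore requires the extra Cauchy--Schwarz step $(1+C_xC_y)^2\le(1+C_x^2)(1+C_y^2)$; both variants are valid.
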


\begin{proof}
 Let $C_z$ be the real number given by~\eqref{eq:C_z} 
 for all $z\in C$. 
 We first prove~(i). 
 Suppose that $T$ is firmly vicinal. 
 It is obvious that $T$ is vicinal 
 since $C_z \leq 1$ for all $z\in X$. 
 Further, suppose that $d(v,v')<\pi/2$ for all $v,v'\in X$. 
 Then, using an idea in~\cite{MR3463526}*{Theorem~4.6},  
 we show that $T$ is firmly spherically nonspreading. 
 Let $x,y\in X$ be given. 
 By the definition of firm vicinality and 
 the inequality of arithmetic and geometric means, 
 we have 
\begin{align}
 \begin{split}\label{eq:lem:vicinal-relation-a}
  & \bigl(C_x^2 (1+C_y^2)C_y+C_y^2 (1+C_x^2)C_x\bigr)
  \cos d(Tx,Ty) \\
  &\geq 
  C_x^2 \cos d(Tx,y) +C_y^2 \cos d(Ty,x) +
  C_x^2 C_y^2 \bigl(\cos d(Tx,y) + \cos d(Ty,x)\bigr) \\
  &\geq 2C_xC_y(1+C_xC_y) 
  \sqrt{\cos d(Tx,y) \cos d(Ty,x)}. 
 \end{split}
\end{align}
 On the other hand, we have 
 \begin{align}\label{eq:lem:vicinal-relation-b}
   C_x^2 (1+C_y^2)C_y+C_y^2 (1+C_x^2)C_x
   =C_xC_y (C_x+C_y)(1+C_xC_y). 
 \end{align}
 Noting that $C_xC_y>0$, 
 we have from~\eqref{eq:lem:vicinal-relation-a} 
 and~\eqref{eq:lem:vicinal-relation-b} that 
 \begin{align*}
  (C_x+C_y)^2 \cos^2 d(Tx, Ty) 
  \geq 4 \cos d(Tx,y) \cos d(Ty,x).  
 \end{align*}
 Since $2\geq C_x+C_y$, we know that 
 $T$ is firmly spherically nonspreading. 

 We next prove~(ii). 
 Suppose that $T$ is firmly vicinal and $\Fix(T)$ is nonempty. 
 Let $x\in X$ and $y\in \Fix(T)$ be given. 
 Since $Ty=y$ and $C_y=1$, we have 
 \begin{align*}
  \bigl(2C_x^2+(1+C_x^2)C_x\bigr) \cos d(Tx,y) 
  \geq 2C_x^2 \cos d(Tx,y) + (1+C_x^2) \cos d(y,x) 
 \end{align*}
 and hence 
 \begin{align*}
  (1+C_x^2)C_x \cos d(Tx,y) 
  \geq (1+C_x^2) \cos d(x,y). 
 \end{align*}
 Thus we obtain the conclusion. 
 
 We next prove~(iii). Suppose that $T$ is vicinal and $\Fix (T)$ 
 is nonempty. Let $x\in X$ and $y\in \Fix(T)$ be given.  
 Then we have 
 \begin{align*}
  \begin{split}
  &\bigl(2C_x^2+(1+C_x^2)\bigr) \cos d(Tx,y)
 \geq 2C_x^2 \cos d(Tx,y) 
 +(1+C_x^2) \cos d(y,x) 
  \end{split}
 \end{align*}
 and hence 
 \begin{align*}
  (1+C_x^2) \cos d(Tx,y) 
  \geq 
  (1+C_x^2) \cos d(x,y). 
 \end{align*}
 This implies that 
 \begin{align*}
  \cos d(Tx,y)\geq \cos d(x,y) 
 \end{align*}
 and hence we obtain the conclusion. 

 We finally prove~(iv). Suppose that $d(v,v')<\pi/2$ 
 for all $v,v'\in X$, $T$ is firmly vicinal, 
 and $\Fix(T)$ is nonempty. 
 Let $x\in X$ and $y\in \Fix(T)$ be given.  
 Then it follows from~(i) and~(iii) that $T$ is quasi-nonexpansive. 
 This implies that 
 \begin{align*}
  d(T^{n+1}x, y) \leq d(T^{n}x,y) \leq d(x, y) <\frac \pi 2
 \end{align*} 
 for all $n\in \N$
 and hence $\{d(T^{n}x,y)\}$ converges to some $l\in [0,\pi/2)$. 
 Then it follows from~(ii) that 
 \begin{align*}
   1  \geq \cos d(T^{n+1}x, T^{n}x) 
       \geq \frac{\cos d(T^{n}x, y)}{\cos d(T^{n+1}x, y)} 
       \to \frac{\cos l}{\cos l} = 1. 
 \end{align*}
 This yields that $\cos d(T^{n+1}x, T^{n}x)\to 1$. 
 Therefore we obtain the conclusion. 
\end{proof}

The following example shows that there exists a discontinuous 
spherically nonspreading mapping 
in an admissible complete $\CAT(1)$ space.  

\begin{example}\label{expl:sp}
 Let $\left(S_H, \rho_{S_H}\right)$ be a Hilbert sphere, 
 both $r$ and $\delta$ real numbers such that 
 \begin{align*}
  \frac{\pi}{8} < r < \frac{\pi}{4}, 
  \quad 0<\delta<1, 
  \quad \textrm{and} \quad 
  \cos \frac{\pi}{8} \leq \cos ^2 \frac{\delta \pi}{8}, 
 \end{align*}
 $p$ an element of $S_H$, $A=\{p\}$, 
 $B=S_{\delta \pi/8}[p]$, 
 $C=S_{\pi/8}[p]$, $X=S_r[p]$, 
 and both $P_A$ and $P_B$ the metric projections 
 of $X$ onto $A$ and $B$, respectively.  
 Then the mapping $T$ given by 
 \begin{align*}
  Tx=
  \begin{cases}
   P_A x & (x\in C); \\
   P_B x & (x\in X\setminus C)
  \end{cases}
 \end{align*}
 is a spherically nonspreading mapping of $X$ into itself.  
\end{example}

\begin{proof}
 We denote by $d$ the restriction of $\rho_{S_H}$ 
 on $X\times X$. 
 Since 
 \begin{align*}
  d(x,y)\leq d(x,p)+d(p,y) \leq 2r< \frac{\pi}{2}
 \end{align*}
 for all $x,y\in X$, the space $X$ is admissible. 
 We can see that $X$ is a convex subset of $S_H$. 
 In fact, if $x,y\in X$ and $\alpha \in [0,1]$, 
 then Lemma~\ref{lem:CAT1-CMS} implies that 
 \begin{align*}
  \begin{split}
   \cos d\bigl(\alpha x \oplus (1-\alpha)y, p\bigr) 
   &\geq \alpha \cos d(x,p) + (1-\alpha) \cos d(y, p) \\
   &\geq \alpha \cos r + (1-\alpha) \cos r 
  = \cos r.  
  \end{split}
 \end{align*}
 This implies that $d(\alpha x \oplus (1-\alpha)y, p)\leq r$ 
 and hence $\alpha x\oplus (1-\alpha)y \in X$. 
 Since $X$ is a nonempty closed convex subset of 
 the complete $\CAT(1)$ space $S_H$, 
 the space $X$ is also a complete $\CAT(1)$ space. 
 We can also see that $B$ is a convex subset of $X$.  

 By Lemmas~\ref{lem:resolvent} and~\ref{lem:vicinal-relation}, 
 we know that $P_A$ and $P_B$ are spherically nonspreading 
 and hence 
 \begin{align*}
  \cos ^2 d(Tx,Ty) \geq \cos d(Tx,y)\cos d(Ty, x)
 \end{align*}
 whenever $(x,y)\in C^2$ or 
 $(x,y)\in \left(X\setminus C\right)^2$. 
 Suppose that $x\in X\setminus C$ and $y\in C$. 
 Then we have $d(Tx,Ty) = d(P_Bx,p) \leq \delta \pi/8$ 
 and hence 
 \begin{align}\label{expl:sp-a}
  \cos ^2 d(Tx,Ty)\geq \cos ^2 \frac{\delta \pi}{8}. 
 \end{align}
 On the other hand, we have 
 $d(Ty, x)=d(p, x)>\pi/8$ and hence 
 \begin{align}\label{expl:sp-b}
  \cos d(Ty, x) < \cos \frac{\pi}{8}. 
 \end{align}
 By~\eqref{expl:sp-a} and~\eqref{expl:sp-b}, we have 
 \begin{align*}
   \cos d(Tx,y)\cos d(Ty, x) 
  \leq \cos d(Ty, x) 
  < \cos \frac{\pi}{8} 
  \leq \cos ^2 \frac{\delta\pi}{8} 
  \leq \cos ^2 d(Tx,Ty). 
 \end{align*}
 Therefore $T$ is spherically nonspreading. 
\end{proof}

\section{Existence and approximation of fixed points}
\label{sec:main}

In this section, we study the existence and approximation 
of fixed points of vicinal mappings 
and firmly vicinal mappings, respectively.   

Using Theorem~\ref{thm:argmax}, 
we obtain the following fixed point theorem for vicinal mappings 
in admissible complete $\CAT(1)$ spaces.  

\begin{theorem}\label{thm:vicinal-fpt}
 Let $X$ be an admissible complete $\CAT(1)$ space 
 and $T$ a vicinal mapping of $X$ into itself. 
 Then $\Fix(T)$ is nonempty if and only if 
 there exists $x\in X$ such that 
 $\{T^nx\}$ is spherically bounded and 
 $\sup_{n} d(T^{n}x, T^{n-1}x)< \pi/2$. 
\end{theorem}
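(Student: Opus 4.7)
The necessity direction is immediate: if $p\in \Fix(T)$, then taking $x=p$ yields the constant sequence $\{T^nx\}=\{p\}$, which is spherically bounded and satisfies $\sup_n d(T^nx, T^{n-1}x)=0<\pi/2$.

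For sufficiency, suppose $x\in X$ satisfies both hypotheses. Write $a_n=T^nx$ and $c_k=\cos^2 d(a_{k+1},a_k)$ for $n,k\geq 0$. The hypothesis $\sup_n d(T^nx,T^{n-1}x)<\pi/2$ gives a constant $\gamma>0$ with $c_k\geq \gamma$ for all $k$, hence $\eta_k:=c_k/(1+c_k)\in [\gamma/(1+\gamma),\,1/2]$. My plan is to apply Theorem~\ref{thm:argmax} with $z_n=a_n$ and weights $\beta_n=\eta_{n-1}$, which are bounded, bounded away from zero, and therefore satisfy $\sum_n\beta_n=\infty$. This produces the unique maximizer $u\in X$ of
\[
g(y)=\liminf_{N\to \infty}\frac{1}{\sum_{n=1}^N \eta_{n-1}}\sum_{n=1}^N \eta_{n-1}\cos d(a_n,y).
\]
The goal is then to prove $Tu=u$.

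Apply the vicinal inequality~\eqref{eq:vicinal} to the pair $(a_k,u)$, writing $A_j=\cos d(a_j,Tu)$, $B_j=\cos d(a_j,u)$, and $c_*=\cos^2 d(Tu,u)$, which is positive by admissibility. This reads
\[
\bigl(c_k(1+c_*)+c_*(1+c_k)\bigr)A_{k+1}\geq c_k(1+c_*) B_{k+1}+c_*(1+c_k) A_k.
\]
Dividing by $(1+c_*)(1+c_k)$ rearranges to the additive form
\[
\eta_k(A_{k+1}-B_{k+1})+\eta_*(A_{k+1}-A_k)\geq 0,\qquad \eta_*:=\frac{c_*}{1+c_*}>0,
\]
in which the second summand telescopes. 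Summing $k=0,\ldots,N-1$ gives
\[
\sum_{k=0}^{N-1}\eta_k(A_{k+1}-B_{k+1})\geq -\eta_*(A_N-A_0),
\]
whose right-hand side is bounded independently of $N$.

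Dividing by $\sum_{k=0}^{N-1}\eta_k\to \infty$ and taking $\liminf_N$ (using the superadditivity $\liminf(f+g)\geq \liminf f+\liminf g$) yields $g(Tu)\geq g(u)$. Combined with $g(u)\geq g(Tu)$ from $u$ being a maximizer, equality holds, and the uniqueness asserted by Theorem~\ref{thm:argmax} forces $Tu=u$. The main obstacle is the choice of weights $\beta_n$: a naive constant choice such as $\beta_n=1$ fails because the vicinal coefficients $c_k$ vary with $k$ and obstruct direct Cesaro comparison. The decisive manipulation is the division by $(1+c_*)(1+c_k)$, which separates the $u$-dependent constant $\eta_*$ from the sequence-dependent weights $\eta_k$; the former contributes a telescoping boundary term that vanishes under normalization, while the latter depends only on $\{a_n\}$ and therefore serves as a legitimate sequence of weights in Theorem~\ref{thm:argmax}.
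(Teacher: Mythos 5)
Your proposal is correct and follows essentially the same route as the paper: the same weights $\beta_n=C_{x_n}^2/(1+C_{x_n}^2)$ (your $\eta_{n-1}$), the same appeal to Theorem~\ref{thm:argmax} for a unique maximizer, and the same rearrangement of the vicinal inequality into a weighted term plus a telescoping term that vanishes after normalization. The only differences are notational (an index shift in the orbit and the explicit mention of superadditivity of $\liminf$, which the paper uses implicitly).
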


\begin{proof}
 Let $C_z$ be the real number given by~\eqref{eq:C_z} 
 for all $z\in C$. 
 The only if part is obvious. In fact, if $\Fix(T)$ is nonempty 
 and $x\in \Fix(T)$, then we have 
 \begin{align*}
  \sup_{n} d(T^{n}x, T^{n-1}x) =0<\frac{\pi}{2} 
 \end{align*}
 and
 \begin{align*}
  \inf_{y\in X}\limsup_{n\to \infty} d(T^nx, y)
  =\inf_{y\in X}d(x, y) < \frac \pi 2,  
 \end{align*}
where the last inequality 
 follows from the admissibility of $X$. 

 We next prove the if part. Suppose that 
 there exists $x\in X$ such that 
 $\{T^nx\}$ is spherically bounded and 
 $\sup_{n} d(T^{n}x, T^{n-1}x)< \pi / 2$. 
 Set 
 \begin{align*}
  x_n=T^{n-1}x, \quad 
  \beta_n=\frac{C_{x_n}^2}{1+C_{x_n}^2}, 
  \quad 
  \textrm{and} 
  \quad 
  \sigma_n = \sum_{k=1}^{n}\beta_k
 \end{align*}
 for all $n\in \N$ and let $g$ be the real function on $X$ defined by 
 \begin{align*}
  g(y)=\liminf_{n\to \infty} \frac{1}{\sigma_n} 
  \sum_{k=1}^{n}\beta_k \cos d(y, x_{k+1})
 \end{align*}
 for all $y\in X$. 
 Since 
\begin{align*}
 \inf_{n}C_{x_n} = \inf_{n} \cos d(T^{n}x, T^{n-1}x)
=\cos \sk{\sup_{n} d(T^{n}x, T^{n-1}x)} > \cos \frac \pi 2 = 0, 
\end{align*} 
 we have 
 \begin{align*}
  \sigma_n= \sum_{k=1}^{n}\beta_k 
\geq \frac{1}{2}\sum_{k=1}^{n} C_{x_k}^2
\geq \sk{\inf_{m}C_{x_m}}^2\frac{n}{2} \to \infty 
 \end{align*}
as $n\to \infty$. 
Hence we have $\sum_{n=1}^{\infty}\beta_n = \infty$. 
Thus Theorem~\ref{thm:argmax} ensures that 
$g$ has a unique maximizer $p\in X$.   

On the other hand, the vicinality of $T$ implies that 
\begin{align*}
 \begin{split}
  &\bigl(C_{x_k}^2 (1+C_p^2)+C_p^2 (1+C_{x_k}^2)\bigr)
  \cos d(x_{k+1},Tp) \\
  &\quad \geq C_{x_k}^2 (1+C_p^2) \cos d(x_{k+1},p) 
 +C_p^2 (1+C_{x_k}^2) \cos d(Tp,x_k)    
 \end{split}
\end{align*}
and hence 
 \begin{align*}
  \begin{split}
  & \frac{C_{x_k}^2}{1+C_{x_k}^2} \cos d(x_{k+1},Tp) \\
  &\quad \geq \frac{C_{x_k}^2}{1+C_{x_k}^2} \cos d(x_{k+1},p) 
 +\frac{C_{p}^2}{1+C_{p}^2} 
 \bigl(\cos d(Tp, x_k) -\cos d(Tp, x_{k+1})\bigr)
  \end{split}
 \end{align*} 
 for all $k\in \N$. 
 This inequality yields 
 \begin{align*}
  \begin{split}
  &\frac{1}{\sigma_n}\sum_{k=1}^{n}\beta_k \cos d(x_{k+1},Tp) \\
  &\quad \geq \frac{1}{\sigma_n}\sum_{k=1}^{n}\beta_k \cos d(x_{k+1},p) 
 +\frac{1}{\sigma_n}\cdot \frac{C_{p}^2}{1+C_{p}^2} 
 \bigl(\cos d(Tp, x_1) -\cos d(Tp, x_{n+1})\bigr). 
  \end{split}  
 \end{align*}
 Taking the lower limit in this inequality, we obtain $g(Tp)\geq g(p)$. 
 Since $p$ is the unique maximizer of $g$, we conclude that 
 $Tp=p$. Therefore $T$ has a fixed point. 
\end{proof}

\begin{remark}
 In~\cite{MR3463526}*{Theorem~5.2}, 
 it was shown that if $X$ is an admissible complete $\CAT(1)$ space 
 and $T$ is a spherically nonspreading mapping of $X$ into itself, 
 then  $\Fix(T)$ is nonempty if and only if 
 there exists $x\in X$ such that 
 \begin{align*}
 \limsup_{n\to \infty} d(T^nx, Ty) < \frac{\pi}{2} 
 \end{align*}
 for all $y\in X$. 
 Note that Theorem~\ref{thm:vicinal-fpt} 
 is independent of this result. 
\end{remark}

As a direct consequence of Theorem~\ref{thm:vicinal-fpt}, 
we obtain the following corollary. 

\begin{corollary}\label{cor:vicinal-fpt}
 Let $X$ be a complete $\CAT(1)$ space such that 
 $\Diam(X)<\pi/2$. 
 Then every vicinal mapping $T$ of $X$ into itself has a fixed point.  
 \end{corollary}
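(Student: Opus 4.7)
The plan is to derive this as an immediate corollary of Theorem~\ref{thm:vicinal-fpt}. Since $\Diam(X)<\pi/2$, the paper has already observed that $X$ is automatically admissible and every sequence in $X$ is spherically bounded, so the hypothesis of Theorem~\ref{thm:vicinal-fpt} on the ambient space is satisfied. What remains is to check that, for some (indeed any) $x\in X$, the orbit $\{T^n x\}$ satisfies the two conditions required there.

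First I would fix an arbitrary $x\in X$ and consider the orbit $\{T^n x\}$. For spherical boundedness, I would note
\begin{align*}
\inf_{y\in X}\limsup_{n\to\infty} d(T^n x, y) \leq \limsup_{n\to\infty} d(T^n x, x) \leq \Diam(X) < \frac{\pi}{2},
\end{align*}
which verifies the first condition. For the displacement bound, I would use
\begin{align*}
\sup_{n} d(T^n x, T^{n-1} x) \leq \Diam(X) < \frac{\pi}{2},
\end{align*}
which verifies the second.

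With both hypotheses checked, Theorem~\ref{thm:vicinal-fpt} applies and yields $\Fix(T)\neq\emptyset$. There is no substantive obstacle here; the whole content of the corollary is that the diameter hypothesis makes the orbit-level assumptions in Theorem~\ref{thm:vicinal-fpt} automatic. The proof should be just a few lines invoking the two displayed inequalities above.
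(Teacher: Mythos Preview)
Your proposal is correct and follows exactly the paper's intended route: the paper states this result as a direct consequence of Theorem~\ref{thm:vicinal-fpt} without writing out a proof, and your two displayed inequalities are precisely the trivial verifications that the diameter bound $\Diam(X)<\pi/2$ forces both orbit conditions in that theorem.
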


Before obtaining a $\Delta$-convergence theorem, 
we show the following demiclosedness principle 
for vicinal mappings.  

\begin{lemma}\label{lem:demiclosed}
 Let $X$ be a metric space such that 
 $d(v,v')<\pi/2$ for all $v,v'\in X$,  
 $T$ a vicinal mapping of $X$ into itself, 
 $p$ an element of $X$, 
 and $\{x_n\}$ a sequence in $X$ such that 
 $\AC\bigl(\{x_n\}\bigr)=\{p\}$ and 
 $d(Tx_n, x_n)\to 0$. 
 Then $p$ is a fixed point of $T$. 
\end{lemma}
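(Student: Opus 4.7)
The plan is to apply the vicinality inequality with $x=p$ and $y=x_n$ and pass to the limit, exploiting the facts that $d(Tx_n,x_n)\to 0$ (so $C_{x_n}\to 1$) and that the asymptotic center $\AC(\{x_n\})$ is the singleton $\{p\}$. Writing $C_n=\cos d(Tx_n,x_n)$ and $C_p=\cos d(Tp,p)$, vicinality with $x=p$, $y=x_n$ reads
\begin{align*}
 a_n\cos d(Tp,Tx_n)\geq b_n\cos d(Tp,x_n)+c_n\cos d(Tx_n,p),
\end{align*}
where $a_n=C_p^2(1+C_n^2)+C_n^2(1+C_p^2)$, $b_n=C_p^2(1+C_n^2)$, $c_n=C_n^2(1+C_p^2)$. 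The key algebraic observation is that $a_n-b_n=c_n$, so after rearranging I get
\begin{align*}
 c_n\cos d(Tx_n,p)\leq c_n\cos d(Tp,x_n)+a_n\bigl(\cos d(Tp,Tx_n)-\cos d(Tp,x_n)\bigr).
\end{align*}

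Since all distances stay in $[0,\pi/2)$, the triangle inequality and the uniform continuity of $\cos$ on a bounded interval, combined with $d(Tx_n,x_n)\to 0$, give both $\cos d(Tp,Tx_n)-\cos d(Tp,x_n)\to 0$ and $\cos d(Tx_n,p)-\cos d(x_n,p)\to 0$. The coefficients $a_n$ and $c_n$ are bounded above and $c_n$ is bounded below by a positive constant (since $C_n\to 1$ and $C_p>0$ by admissibility), so dividing through by $c_n$ yields
\begin{align*}
 \cos d(x_n,p)\leq \cos d(Tp,x_n)+o(1).
\end{align*}

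Taking the liminf in $n$ and applying Lemma~\ref{lem:limsup-liminf}(ii) to the nonincreasing function $\cos$ on $[0,\pi/2]$ converts this into
\begin{align*}
 \cos\bigl(\limsup_{n\to\infty} d(x_n,p)\bigr)\leq\cos\bigl(\limsup_{n\to\infty} d(Tp,x_n)\bigr),
\end{align*}
which, since $\cos$ is strictly decreasing on $[0,\pi/2]$, gives $\limsup_{n} d(Tp,x_n)\leq\limsup_{n} d(x_n,p)$. Since $\AC(\{x_n\})=\{p\}$ means $p$ is the unique minimizer of $y\mapsto\limsup_{n} d(x_n,y)$, this forces $Tp=p$.

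The main obstacle is handling the passage to the limit cleanly: the inequality mixes three cosines with coefficients depending on $n$, and without the cancellation identity $a_n-b_n=c_n$ (which is exactly what distinguishes vicinality from a generic Jensen-type estimate) the estimate would involve $C_p^2$ times a difference of $\limsup$ and $\liminf$ terms and be too weak to conclude. Once that identity is spotted, the rest is a routine limsup/liminf argument using admissibility and the demiclosedness-style uniqueness of the asymptotic center.
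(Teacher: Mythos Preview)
Your proof is correct and follows essentially the same route as the paper's. The paper applies vicinality to the pair $(x_n,p)$, divides through by $C_{x_n}^2(1+C_p^2)$ (your $c_n$) to obtain $\cos d(Tx_n,Tp)\geq \cos d(Tx_n,p)+\tfrac{C_p^2}{1+C_p^2}\cdot\tfrac{1+C_{x_n}^2}{C_{x_n}^2}\bigl(\cos d(x_n,Tp)-\cos d(Tx_n,Tp)\bigr)$, and then passes to the liminf using $d(Tx_n,x_n)\to 0$ exactly as you do; your identity $a_n-b_n=c_n$ is the same cancellation, just packaged differently.
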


\begin{proof}
 Let $C_z$ be the real number given by~\eqref{eq:C_z} 
 for all $z\in C$. 
 Since $d(Tx_n, x_n)\to 0$, 
 we know that 
 \begin{align}\label{eq:lem:demiclosed-a}
  \lim_{n\to \infty} C_{x_n} = 1. 
 \end{align}
 On the other hand, since $t\mapsto \cos t$ is nonexpansive 
 and $d(Tx_n, x_n) \to 0$, we have 
 \begin{align*}
  \begin{split}
  \abs{\cos d(x_n, Tp) - \cos d(Tx_n, Tp)} 
   &\leq \abs{d(x_n, Tp) - d(Tx_n, Tp)}  \\
   &\leq d(x_n, Tx_n) \to 0
  \end{split}   
 \end{align*}
 and hence 
 \begin{align}\label{eq:lem:demiclosed-b}
  \lim_{n\to \infty}
 \bigl(d(x_n, Tp) - \cos d(Tx_n, Tp)\bigr) =0. 
 \end{align}
 The vicinality of $T$ implies that 
 \begin{align*}
  \begin{split}
   &\bigl(C_{x_n}^2(1+C_p^2) + C_p^2(1+C_{x_n}^2)\bigr)
   \cos d(Tx_n, Tp) \\
   & \geq C_{x_n}^2(1+C_p^2) \cos d(Tx_n, p)
 + C_p^2(1+C_{x_n}^2) \cos d(Tp, x_n)
  \end{split}
 \end{align*}
 and hence 
 \begin{align}
  \begin{split}\label{eq:lem:demiclosed-c} 
   & \cos d(Tx_n, Tp) \\
   & \geq \cos d(Tx_n, p)
 + \frac{C_p^2}{1+C_p^2}\cdot \frac{1+C_{x_n}^2}{C_{x_n}^2} 
   \bigl(\cos d(x_n, Tp) - \cos d(Tx_n, Tp)\bigr)
  \end{split}
 \end{align}
 for all $n\in \N$. 

 Using~\eqref{eq:lem:demiclosed-a},~\eqref{eq:lem:demiclosed-b}, 
 and~\eqref{eq:lem:demiclosed-c}, we have 
 \begin{align*}
   \liminf_{n\to \infty} \cos d(Tx_n, Tp) 
   \geq \liminf_{n\to \infty} \cos d(Tx_n, p). 
 \end{align*}
 Then it follows from Lemma~\ref{lem:limsup-liminf} that 
 \begin{align*}
  \begin{split}
   \cos \left(\limsup_{n\to \infty} d(Tx_n, Tp) \right) 
   &=\liminf_{n\to \infty} \cos d(Tx_n, Tp) \\
   &\geq \liminf_{n\to \infty} \cos d(Tx_n, p) 
   = \cos \left(\limsup_{n\to \infty} d(Tx_n, p)\right)    
  \end{split}
 \end{align*}
 and hence 
 \begin{align*}
  \limsup_{n\to \infty} d(Tx_n, Tp) 
  \leq \limsup_{n\to \infty} d(Tx_n, p). 
 \end{align*}
 It then follows from $d(Tx_n, x_n)\to 0$ that 
 \begin{align*}
 \begin{split}
  \limsup_{n\to \infty} d(x_n, Tp) 
  &= \limsup_{n\to \infty} d(Tx_n, Tp) \\
  &\leq \limsup_{n\to \infty} d(Tx_n, p) = \limsup_{n\to \infty} d(x_n, p).   
 \end{split}
 \end{align*}
 Thus, since $\AC\bigl(\{x_n\}\bigr)=\{p\}$, we conclude that $Tp=p$. 
\end{proof}

We next obtain the following $\Delta$-convergence theorem 
for firmly vicinal mappings in admissible complete $\CAT(1)$ spaces.  

\begin{theorem}\label{thm:firmly-vicinal-conv}
 Let $X$ be an admissible complete $\CAT(1)$ space 
 and $T$ a firmly vicinal mapping of $X$ into itself. 
 If $\Fix(T)$ is nonempty, then 
 $\{T^nx\}$ is $\Delta$-convergent to an element of 
 $\Fix (T)$ for each $x\in X$. 
\end{theorem}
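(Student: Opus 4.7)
The plan is to combine the asymptotic regularity and quasi-nonexpansivity supplied by Lemma~\ref{lem:vicinal-relation}, the demiclosedness principle of Lemma~\ref{lem:demiclosed}, and the convergence criterion of Lemma~\ref{lem:KSY}. Fix $x\in X$, pick any $y\in \Fix(T)$, and write $x_n = T^{n-1}x$. Since $T$ is firmly vicinal, it is vicinal by Lemma~\ref{lem:vicinal-relation}(i). Admissibility of $X$ (which is precisely the condition $d(v,v')<\pi/2$ for all $v,v'\in X$) together with $\Fix(T)\neq\emptyset$ lets me invoke parts~(iii) and~(iv) of the same lemma: $T$ is quasi-nonexpansive and asymptotically regular. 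The first property yields that $\{d(x_n,y)\}$ is nonincreasing and bounded by $d(x,y)<\pi/2$, hence convergent, and in particular $\limsup_n d(x_n,y)<\pi/2$, showing that $\{x_n\}$ is spherically bounded. Asymptotic regularity gives $d(Tx_n,x_n)\to 0$.

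The key step is to prove that $\omega_{\Delta}(\{x_n\})\subseteq \Fix(T)$. Given $z\in \omega_{\Delta}(\{x_n\})$, some subsequence $\{x_{n_i}\}$ is $\Delta$-convergent to $z$, which by definition means $\AC(\{x_{n_i}\})=\{z\}$. Combined with $d(Tx_{n_i},x_{n_i})\to 0$ and the admissibility of $X$, Lemma~\ref{lem:demiclosed} applied to the vicinal map $T$ produces $Tz=z$.

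To finish, fix any $z\in \omega_{\Delta}(\{x_n\})\subseteq \Fix(T)$. Quasi-nonexpansivity then forces $\{d(x_n,z)\}$ to be nonincreasing, hence convergent. All hypotheses of Lemma~\ref{lem:KSY} are in place, so $\{x_n\}$ is $\Delta$-convergent to some $p\in X$. Since $p\in \omega_{\Delta}(\{x_n\})$, the previous paragraph gives $p\in \Fix(T)$, which completes the argument.

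The main obstacle to flag is really just checking that admissibility unlocks the hypothesis "$d(v,v')<\pi/2$" demanded by both Lemma~\ref{lem:vicinal-relation}(iv) and Lemma~\ref{lem:demiclosed}; once that identification is made, the rest is a standard Opial-type $\Delta$-convergence argument built entirely from the pre-packaged tools above, and no additional geometric estimate in $X$ is required.
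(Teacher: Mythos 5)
Your argument is correct and follows essentially the same route as the paper's own proof: quasi-nonexpansivity and asymptotic regularity from Lemma~\ref{lem:vicinal-relation}, spherical boundedness of the orbit, the demiclosedness principle of Lemma~\ref{lem:demiclosed} to get $\omega_{\Delta}\bigl(\{T^nx\}\bigr)\subseteq\Fix(T)$, and then Lemma~\ref{lem:KSY} to conclude. The identification of admissibility with the hypothesis $d(v,v')<\pi/2$ that you flag is exactly how the paper proceeds, so there is no gap.
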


\begin{proof}
 Let $x\in X$ be given. 
 By~(i) and~(iii) of Lemma~\ref{lem:vicinal-relation}, 
 $T$ is quasi-nonexpansive. 
 Combining this property with the admissibility of $X$, 
 we have 
 \begin{align*}
  d(T^{n+1}x, y) \leq d(T^{n}x, y) \leq d(x, y) < \frac \pi 2
 \end{align*}
 for each $y\in \Fix(T)$. 
 This gives us that the sequence $\{d(T^{n}x, y)\}$ 
 converges to an element of $[0,\pi/2)$ for each $y\in \Fix(T)$. 
 Since 
 \begin{align*}
  \inf_{y\in X} \limsup_{n\to \infty} d(T^nx, y) 
  \leq \inf_{y\in \Fix(T)} \limsup_{n\to \infty} d(T^nx, y) 
  \leq \inf_{y\in \Fix(T)} d(x, y) 
  < \frac{\pi}{2}, 
 \end{align*}
 the sequence $\{T^nx\}$ is spherically bounded. 
 
 Let $z$ be any element of $\omega_{\Delta}\bigl(\{T^nx\}\bigr)$. 
 By the definition of $\omega_{\Delta}\bigl(\{T^nx\}\bigr)$, 
 there exists a subsequence $\{T^{n_i}x\}$ of $\{T^nx\}$ 
 which is $\Delta$-convergent to $z$. 
 Then we have $\AC\bigl(\{T^{n_i}x\}\bigr)=\{z\}$. 
 By~(iv) of Lemma~\ref{lem:vicinal-relation}, 
 we know that $T$ is asymptotically regular and hence 
 \begin{align*}
  \lim_{i\to \infty} d\bigl(T(T^{n_i}x), T^{n_i}x\bigr) 
  = \lim_{n\to \infty} d(T^{n+1}x, T^{n}x) =0. 
 \end{align*} 
 According to Lemma~\ref{lem:demiclosed}, we have $z\in \Fix(T)$. 
 Thus $\omega_{\Delta}\bigl(\{T^nx\}\bigr)$ is contained by $\Fix(T)$. 
 Consequently, the real sequence $\{d(T^nx, z)\}$ 
 is convergent for each $z$ in $\omega_{\Delta}\bigl(\{T^nx\}\bigr)$. 
 Then, it follows from Lemma~\ref{lem:KSY} that 
 $\{T^nx\}$ is $\Delta$-convergent to some $p \in X$. 
 Since every subsequence of $\{T^nx\}$ is 
 also $\Delta$-convergent to $p$, we obtain 
 \begin{align*}
  \{p\} = \omega_{\Delta}\bigl(\{T^nx\}\bigr) 
  \subset \Fix(T). 
 \end{align*}  
 Therefore we conclude that $p$ is a fixed point of $T$.  
\end{proof}

\begin{remark}
 In~\cite{MR3463526}*{Theorem~6.5}, 
 it was shown that if $X$ is an admissible complete $\CAT(1)$ space 
 and $T$ is a firmly spherically nonspreading mapping of 
 $X$ into itself such that $\Fix(T)$ is nonempty and 
 \begin{align}\label{eq:fsn-cond}
 \limsup_{n\to \infty} d(y_n, Ty) < \frac{\pi}{2} 
 \end{align}
 whenever $\{y_n\}$ is a sequence in $X$ 
 which is $\Delta$-convergent to $y\in X$, 
 then $\{T^nx\}$ is $\Delta$-convergent to an element of 
 $\Fix (T)$ for each $x\in X$. 
 By Theorem~\ref{thm:firmly-vicinal-conv}, 
 we know that the assumption~\eqref{eq:fsn-cond} 
 is not needed for the special case where $T$ is firmly vicinal. 
\end{remark}

As a direct consequence of Corollary~\ref{cor:vicinal-fpt} and 
Theorem~\ref{thm:firmly-vicinal-conv}, we obtain the following corollary.   

\begin{corollary}\label{cor:firmly-vicinal-conv}
 Let $X$ be a complete $\CAT(1)$ space 
 such that $\Diam(X)<\pi/2$ 
 and $T$ a firmly vicinal mapping of $X$ into itself. 
 Then $\{T^nx\}$ is $\Delta$-convergent to an element of 
 $\Fix (T)$ for each $x\in X$. 
\end{corollary}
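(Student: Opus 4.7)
The plan is to derive this corollary by chaining together Corollary~\ref{cor:vicinal-fpt} and Theorem~\ref{thm:firmly-vicinal-conv}, since the hypothesis $\Diam(X)<\pi/2$ is strong enough to trigger both. First I would observe that $\Diam(X)<\pi/2$ implies $d(v,v')\leq \Diam(X)<\pi/2$ for all $v,v'\in X$, so $X$ is admissible in the sense of~\eqref{eq:intro-admissible}; in particular all the results of Section~\ref{sec:main} stated for admissible complete $\CAT(1)$ spaces apply to $X$.

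Next I would invoke part~(i) of Lemma~\ref{lem:vicinal-relation} to note that every firmly vicinal mapping is vicinal, so $T$ is in particular a vicinal self-mapping of the complete $\CAT(1)$ space $X$ with $\Diam(X)<\pi/2$. Corollary~\ref{cor:vicinal-fpt} then yields that $\Fix(T)$ is nonempty.

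Finally, with $X$ admissible, $T$ firmly vicinal, and $\Fix(T)\neq\emptyset$, all hypotheses of Theorem~\ref{thm:firmly-vicinal-conv} are satisfied. Applying that theorem gives, for each $x\in X$, that the Picard iterates $\{T^nx\}$ are $\Delta$-convergent to an element of $\Fix(T)$, which is exactly the desired conclusion.

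There is no real obstacle here: the only thing to verify carefully is that $\Diam(X)<\pi/2$ does give the admissibility condition needed to invoke the two earlier results, and this is immediate from the definition of the diameter.
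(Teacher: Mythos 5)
Your proposal is correct and follows exactly the route the paper intends: it states that Corollary~\ref{cor:firmly-vicinal-conv} is a direct consequence of Corollary~\ref{cor:vicinal-fpt} (which gives $\Fix(T)\neq\emptyset$, using that a firmly vicinal mapping is vicinal by Lemma~\ref{lem:vicinal-relation}(i)) and Theorem~\ref{thm:firmly-vicinal-conv}. The observation that $\Diam(X)<\pi/2$ implies admissibility is the only bridging step needed, and you have verified it.
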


As a direct consequence of 
Lemma~\ref{lem:resolvent}, 
Theorems~\ref{thm:vicinal-fpt}, 
and~\ref{thm:firmly-vicinal-conv}, we obtain the following corollary.   

\begin{corollary}
\label{cor:cmp}
 Let $X$ be an admissible complete $\CAT(1)$ space, 
 $f$ a proper lower semicontinuous convex function 
 of $X$ into $(-\infty, \infty]$, and $R_f$ the resolvent of $f$. 
 Then $\Argmin_X f$ is nonempty if and only if 
 there exists $x\in X$ such that 
 $\{R_f^nx\}$ is spherically bounded and 
  $\sup_{n} d(R_f^{n}x, R_f^{n-1}x)<\pi/2$. 
 In this case, $\{R_f^nx\}$ is $\Delta$-convergent to 
 an element of $\Argmin_X f$ for each $x\in X$.   
\end{corollary}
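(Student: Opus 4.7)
The plan is to deduce the corollary directly by specialising the general fixed point and convergence results to the resolvent $R_f$, using Lemma~\ref{lem:resolvent} as the bridge between convex optimisation and the theory of (firmly) vicinal mappings. Lemma~\ref{lem:resolvent} supplies the two facts we need about $R_f$ as a self-mapping of $X$: it is firmly vicinal, and its fixed point set coincides with $\Argmin_X f$. Consequently, any statement about fixed points of $R_f$ translates verbatim into a statement about minimisers of $f$.

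For the equivalence, I would first note that firm vicinality of $R_f$ implies vicinality by Lemma~\ref{lem:vicinal-relation}(i), so Theorem~\ref{thm:vicinal-fpt} applies to $R_f$. That theorem says $\Fix(R_f)$ is nonempty if and only if some orbit $\{R_f^nx\}$ is spherically bounded with $\sup_n d(R_f^nx, R_f^{n-1}x)<\pi/2$. Replacing $\Fix(R_f)$ by $\Argmin_X f$ via Lemma~\ref{lem:resolvent} yields exactly the stated equivalence, with no further work required.

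For the ``in this case'' clause, assume $\Argmin_X f$, and hence $\Fix(R_f)$, is nonempty. Since $R_f$ is firmly vicinal, Theorem~\ref{thm:firmly-vicinal-conv} applies directly and gives that $\{R_f^nx\}$ is $\Delta$-convergent to an element of $\Fix(R_f)=\Argmin_X f$ for every $x\in X$. This completes the plan.

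There is no substantive obstacle here: the entire corollary is a packaging result, and the only thing to check is that the hypotheses of Theorems~\ref{thm:vicinal-fpt} and~\ref{thm:firmly-vicinal-conv} are met, which is immediate from Lemma~\ref{lem:resolvent}. The one minor point worth flagging explicitly in the write-up is the passage from firm vicinality to vicinality, so that Theorem~\ref{thm:vicinal-fpt} (stated for vicinal mappings) can legitimately be invoked for $R_f$.
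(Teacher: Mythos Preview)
Your proposal is correct and matches the paper's approach exactly: the paper states the corollary as a direct consequence of Lemma~\ref{lem:resolvent}, Theorem~\ref{thm:vicinal-fpt}, and Theorem~\ref{thm:firmly-vicinal-conv}, which is precisely the chain of deductions you outline. Your explicit mention of Lemma~\ref{lem:vicinal-relation}(i) to pass from firm vicinality to vicinality is a welcome clarification that the paper leaves implicit.
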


\section{Results in $\textup{CAT}(\kappa)$ spaces with a positive $\kappa$}
\label{sec:positive-kappa}

In this section, 
we define the concepts of 
$\kappa$-vicinal mappings and firmly $\kappa$-vicinal mappings 
and obtain two corollaries of our results for these mappings 
in complete $\CAT(\kappa)$ spaces with a positive real number $\kappa$.  

Let $\kappa$ be a positive real number, $X$ a metric space such that 
$d(v,v')\leq \pi/(2\sqrt{\kappa})$ for all $v,v'\in X$, 
$T$ a mapping of $X$ into itself, and $\tilde{C}_z$ 
the real number defined by 
\begin{align*}
 \tilde{C}_z=\cos \sqrt{\kappa} d(Tz,z)
\end{align*}
for all $z\in X$. 
The mapping $T$ is said to be 
\begin{itemize}
 \item $\kappa$-vicinal if 
\begin{align*}
 \begin{split}
  & \bigl(\tilde{C}_x^2 (1+\tilde{C}_y^2)+\tilde{C}_y^2 (1+\tilde{C}_x^2)\bigr)
  \cos \sqrt{\kappa}d(Tx,Ty) \\
  &\quad \geq \tilde{C}_x^2 (1+\tilde{C}_y^2) \cos \sqrt{\kappa}d(Tx,y) 
 +\tilde{C}_y^2 (1+\tilde{C}_x^2) \cos \sqrt{\kappa}d(Ty,x) 
 \end{split}
\end{align*}
for all $x,y\in X$; 
 \item firmly $\kappa$-vicinal if 
\begin{align*}
 \begin{split}
  & \bigl(\tilde{C}_x^2(1+\tilde{C}_y^2)\tilde{C}_y+\tilde{C}_y^2 (1+\tilde{C}_x^2)\tilde{C}_x\bigr)
  \cos \sqrt{\kappa}d(Tx,Ty) \\
  &\quad \geq \tilde{C}_x^2 (1+\tilde{C}_y^2) \cos \sqrt{\kappa}d(Tx,y) 
 +\tilde{C}_y^2 (1+\tilde{C}_x^2) \cos \sqrt{\kappa}d(Ty,x) 
 \end{split}
\end{align*}
for all $x,y\in X$. 
\end{itemize}
Note that $1$-vicinal mappings and firmly $1$-vicinal mappings 
are coincident with vicinal mappings 
and firmly vicinal mappings, respectively. 

Let $\kappa$ be a positive real number, 
$D_{\kappa}$ the real number given by $D_{\kappa}=\pi/\sqrt{\kappa}$, 
and $(M_{\kappa}, \rho_{\kappa})$ the uniquely 
$D_{\kappa}$-geodesic space given by 
\begin{align*}
 (M_{\kappa}, \rho_{\kappa}) 
 = \left(\S^2, \frac{1}{\sqrt{\kappa}}\rho_{\S^2}\right). 
\end{align*}
A metric space $X$ is said to be a $\CAT(\kappa)$ space 
if it is $D_{\kappa}$-geodesic and 
\begin{align*}
 d(p, q) \leq \rho_{\kappa} (\bar{p}, \bar{q})
\end{align*}
whenever $\Delta$ is a geodesic triangle 
with vertices $x_1,x_2,x_3\in X$ 
satisfying 
\begin{align*}
 d(x_1,x_2) + d(x_2,x_3) + d(x_3,x_1) < 2D_{\kappa}, 
\end{align*}
$\bar{\Delta}$ is a comparison triangle for $\Delta$ in $M_{\kappa}$, 
and $\bar{p}, \bar{q}\in \bar{\Delta}$ are comparison points for 
$p, q\in \Delta$, respectively. 
It is obvious that $(X,d)$ is a complete $\CAT(\kappa)$ space 
such that $d(v,v')<D_{\kappa}/2$ for all $v,v'\in X$ 
if and only if $(X, \sqrt{\kappa} d)$ is 
an admissible complete $\CAT(1)$ space. 

As direct consequences of 
Theorems~\ref{thm:vicinal-fpt} and~\ref{thm:firmly-vicinal-conv}, 
we obtain the following two corollaries, respectively.  

\begin{corollary}\label{cor:vicinal-fpt-kappa}
 Let $\kappa$ be a positive real number, 
 $X$ a complete $\CAT(\kappa)$ space 
 such that $d(v,v')<D_{\kappa}/2$ for all $v,v'\in X$, 
 and $T$ a $\kappa$-vicinal mapping of $X$ into itself. 
 Then $\Fix(T)$ is nonempty if and only if 
 there exists $x\in X$ such that 
 \begin{align*}
  \inf_{y\in X} \limsup_{n\to \infty} 
  d(T^nx,y)<\frac{D_{\kappa}}{2} 
  \quad \textrm{and} \quad 
  \sup_{n} d(T^{n}x, T^{n-1}x)
 < \frac{D_{\kappa}}{2}. 
 \end{align*}
\end{corollary}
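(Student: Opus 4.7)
The plan is to reduce the corollary to Theorem~\ref{thm:vicinal-fpt} by the metric rescaling already hinted at just before the statement. Concretely, I would set $\tilde{d} = \sqrt{\kappa}\, d$ and appeal to the equivalence that $(X,d)$ is a complete $\CAT(\kappa)$ space with $d(v,v') < D_\kappa/2$ for all $v,v' \in X$ if and only if $(X,\tilde{d})$ is an admissible complete $\CAT(1)$ space. This converts the ambient geometry from $\CAT(\kappa)$ to $\CAT(1)$ at no cost.

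Next I would verify that under this rescaling the $\kappa$-vicinality of $T$ with respect to $d$ is exactly the vicinality of $T$ with respect to $\tilde{d}$. This is essentially by inspection of the definitions: for any $z \in X$,
\begin{align*}
 \tilde{C}_z = \cos \sqrt{\kappa}\, d(Tz, z) = \cos \tilde{d}(Tz, z),
\end{align*}
and similarly every occurrence of $\cos \sqrt{\kappa}\, d(\cdot, \cdot)$ in the $\kappa$-vicinality inequality becomes $\cos \tilde{d}(\cdot, \cdot)$, so the two inequalities coincide.

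I then translate the two quantitative conditions. Since $\tilde{d} = \sqrt{\kappa}\, d$, we have
\begin{align*}
 \inf_{y \in X} \limsup_{n \to \infty} \tilde{d}(T^n x, y)
 = \sqrt{\kappa}\, \inf_{y \in X} \limsup_{n \to \infty} d(T^n x, y),
\end{align*}
so $\{T^n x\}$ is spherically bounded in $(X, \tilde{d})$ if and only if the first displayed condition in the corollary holds; likewise $\sup_n \tilde{d}(T^n x, T^{n-1} x) < \pi/2$ is equivalent to $\sup_n d(T^n x, T^{n-1} x) < D_\kappa / 2$. The fixed point set $\Fix(T)$ is of course the same set regardless of which of the two (topologically equivalent) metrics we use.

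With these identifications in place, both directions of the biconditional follow immediately by applying Theorem~\ref{thm:vicinal-fpt} to the vicinal self-mapping $T$ on the admissible complete $\CAT(1)$ space $(X, \tilde{d})$. Since the entire argument is a rescaling, there is no real obstacle; the only care needed is to confirm that the rescaling preserves the $\CAT$ condition (which is exactly the equivalence recorded in the text) and that the vicinality inequality transforms correctly, which follows because $\cos \sqrt{\kappa}\, d = \cos \tilde{d}$ term by term.
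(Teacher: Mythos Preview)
Your proposal is correct and follows exactly the approach the paper intends: the paper states the corollary as a direct consequence of Theorem~\ref{thm:vicinal-fpt}, having just recorded the equivalence that $(X,d)$ is a complete $\CAT(\kappa)$ space with $d(v,v')<D_\kappa/2$ for all $v,v'$ if and only if $(X,\sqrt{\kappa}\,d)$ is an admissible complete $\CAT(1)$ space. Your verification that $\kappa$-vicinality in $d$ coincides with vicinality in $\tilde d=\sqrt{\kappa}\,d$, together with the translation of the spherical boundedness and displacement conditions, is precisely the content the paper leaves implicit.
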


\begin{corollary}\label{cor:firmly-vicinal-conv-kappa}
 Let $\kappa$ be a positive real number, 
 $X$ a complete $\CAT(\kappa)$ space 
 such that $d(v,v')<D_{\kappa}/2$ for all $v,v'\in X$, 
 and $T$ a firmly $\kappa$-vicinal mapping of $X$ into itself. 
 If $\Fix(T)$ is nonempty, then 
 $\{T^nx\}$ is $\Delta$-convergent to an element of 
 $\Fix (T)$ for each $x\in X$. 
\end{corollary}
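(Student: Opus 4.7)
The plan is to reduce Corollary~\ref{cor:firmly-vicinal-conv-kappa} to Theorem~\ref{thm:firmly-vicinal-conv} by rescaling the metric. Let $d' = \sqrt{\kappa}\, d$. As already noted just before the statement, $(X, d')$ is then an admissible complete $\CAT(1)$ space. The idea is to verify that, under this rescaling, the hypothesis ``firmly $\kappa$-vicinal with respect to $d$'' translates precisely to ``firmly vicinal with respect to $d'$'', so that Theorem~\ref{thm:firmly-vicinal-conv} applies and delivers the required $\Delta$-convergence.

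First I would observe that for every $z \in X$,
\begin{align*}
 \cos d'(Tz, z) = \cos \sqrt{\kappa}\, d(Tz, z) = \tilde{C}_z,
\end{align*}
so the constants $\tilde{C}_z$ appearing in the definition of firmly $\kappa$-vicinality are exactly the constants $C_z$ associated with $T$ in the metric space $(X, d')$. Likewise, $\cos \sqrt{\kappa}\, d(Tx, Ty) = \cos d'(Tx, Ty)$, and similarly for the other cosine terms. Hence the firm $\kappa$-vicinality inequality for $T$ in $(X, d)$ is literally the firm vicinality inequality for $T$ in $(X, d')$.

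Next I would verify that the notion of $\Delta$-convergence is invariant under multiplying the metric by a positive constant: for any sequence $\{x_n\}$ and any $y$, $\limsup_n d'(x_n, y) = \sqrt{\kappa}\, \limsup_n d(x_n, y)$, so a point $z$ attains $\inf_y \limsup_n d(x_n, y)$ if and only if it attains $\inf_y \limsup_n d'(x_n, y)$. Hence $\AC(\{x_n\})$ is the same in $(X,d)$ and $(X,d')$, and so is $\omega_\Delta(\{x_n\})$. Similarly $\Fix(T)$ is manifestly a set-theoretic notion and is unaffected by the rescaling.

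Having established these translations, the proof is immediate: fix $x \in X$; since $\Fix(T)$ is nonempty and $T$ is firmly vicinal in the admissible complete $\CAT(1)$ space $(X, d')$, Theorem~\ref{thm:firmly-vicinal-conv} yields a point $p \in \Fix(T)$ such that $\{T^n x\}$ is $\Delta$-convergent to $p$ in $(X, d')$, which by the above invariance is the same as $\Delta$-convergence in $(X, d)$. I do not expect any real obstacle here; the only point that requires care is the (entirely formal) verification that the rescaling preserves each piece of structure invoked by Theorem~\ref{thm:firmly-vicinal-conv}, namely the $\CAT(1)$ condition, admissibility, firm vicinality, the fixed point set, and $\Delta$-convergence.
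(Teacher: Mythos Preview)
Your proposal is correct and follows exactly the approach intended by the paper, which presents this corollary as a direct consequence of Theorem~\ref{thm:firmly-vicinal-conv} via the rescaling $(X,d)\mapsto (X,\sqrt{\kappa}\,d)$ noted just before the statement. You have simply spelled out the routine verifications (that firm $\kappa$-vicinality becomes firm vicinality and that $\Delta$-convergence is scale-invariant) which the paper leaves implicit.
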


\section*{Acknowledgment}
The author would like to express his sincere appreciation 
to the anonymous referee and Professor Simeon Reich 
for their helpful comments on the 
original version of this paper. 
This work was supported by 
JSPS KAKENHI Grant Numbers 25800094 and 17K05372. 

\begin{bibdiv}
 \begin{biblist}

\bib{MR3206460}{article}{
   author={Ariza-Ruiz, David},
   author={Leu\c stean, Lauren\c tiu},
   author={L\'opez-Acedo, Genaro},
   title={Firmly nonexpansive mappings in classes of geodesic spaces},
   journal={Trans. Amer. Math. Soc.},
   volume={366},
   date={2014},
   pages={4299--4322},
}

\bib{MR3047087}{article}{
   author={Ba{\v{c}}{\'a}k, Miroslav},
   title={The proximal point algorithm in metric spaces},
   journal={Israel J. Math.},
   volume={194},
   date={2013},
   pages={689--701},
}

\bib{MR3241330}{book}{
   author={Ba{\v{c}}{\'a}k, Miroslav},
   title={Convex Analysis and Optimization in Hadamard Spaces},
   publisher={De Gruyter, Berlin},
   date={2014},
}

\bib{MR3346750}{article}{
   author={Ba{\v{c}}{\'a}k, Miroslav},
   author={Reich, Simeon},
   title={The asymptotic behavior of a class of nonlinear semigroups in Hadamard spaces},
   journal={J. Fixed Point Theory Appl.},
   volume={16},
   date={2014},
   pages={189--202},
}

\bib{MR0390843}{book}{
   author={Barbu, Viorel},
   title={Nonlinear Semigroups and Differential Equations in Banach Spaces},
   publisher={Editura Academiei Republicii Socialiste Rom\^ania, Bucharest;
   Noordhoff International Publishing, Leiden},
   date={1976},
}

\bib{MR2798533}{book}{
   author={Bauschke, Heinz H.},
   author={Combettes, Patrick L.},
   title={Convex Analysis and Monotone Operator Theory in Hilbert Spaces},
   publisher={Springer, New York},
   date={2011},
}

\bib{MR491922}{article}{
   author={Br{\'e}zis, H.},
   author={Lions, P.-L.},
   title={Produits infinis de r\'esolvantes},
   journal={Israel J. Math.},
   volume={29},
   date={1978},
   pages={329--345},
}

\bib{MR1744486}{book}{
   author={Bridson, Martin R.},
   author={Haefliger, Andr{\'e}},
   title={Metric Spaces of Non-positive Curvature},
   publisher={Springer-Verlag, Berlin},
   date={1999},
}

\bib{MR470761}{article}{
   author={Bruck, Ronald E.},
   author={Reich, Simeon},
   title={Nonexpansive projections and resolvents of accretive operators in Banach spaces},
   journal={Houston J. Math.},
   volume={3},
   date={1977},
   pages={459--470},
}

\bib{MR2508878}{article}{
   author={Esp{\'{\i}}nola, Rafa},
   author={Fern{\'a}ndez-Le{\'o}n, Aurora},
   title={${\rm CAT}(k)$-spaces, weak convergence and fixed points},
   journal={J. Math. Anal. Appl.},
   volume={353},
   date={2009},
   pages={410--427},
}

\bib{MR3597363}{article}{
   author={Esp{\'{\i}}nola, Rafa},
   author={Nicolae, Adriana},
   title={Proximal minimization in ${\rm CAT}(\kappa)$ spaces},
   journal={J. Nonlinear Convex Anal.}, 
   volume={17},
   date={2016}, 
   pages={2329--2338}
}

\bib{MR744194}{book}{
   author={Goebel, Kazimierz},
   author={Reich, Simeon},
   title={Uniform Convexity, Hyperbolic Geometry, and Nonexpansive Mappings},
   publisher={Marcel Dekker, Inc., New York},
   date={1984},
}


\bib{MR1360608}{article}{
   author={Jost, J{\"u}rgen},
   title={Convex functionals and generalized harmonic maps into spaces of
   nonpositive curvature},
   journal={Comment. Math. Helv.},
   volume={70},
   date={1995},
   pages={659--673},
}

\bib{MR1113394}{article}{
   author={Kendall, Wilfrid S.},
   title={Convexity and the hemisphere},
   journal={J. London Math. Soc. (2)},
   volume={43},
   date={1991},
   pages={567--576},
}

\bib{MR3463526}{article}{
   author={Kimura, Yasunori},
   author={Kohsaka, Fumiaki},
   title={Spherical nonspreadingness of resolvents of convex functions in
   geodesic spaces},
   journal={J. Fixed Point Theory Appl.},
   volume={18},
   date={2016},
   pages={93--115},
}

\bib{MR3574140}{article}{
   author={Kimura, Yasunori},
   author={Kohsaka, Fumiaki},
   title={Two modified proximal point algorithms for convex functions in Hadamard spaces},
   journal={Linear Nonlinear Anal.}, 
   volume={2},
   date={2016},
   status={69--86}, 
}

\bib{KimuraKohsaka-PPA-CAT1}{article}{
   author={Kimura, Yasunori},
   author={Kohsaka, Fumiaki},
   title={The proximal point algorithm in geodesic spaces with curvature bounded above},
   journal={Linear Nonlinear Anal.}, 
   volume={3},
   date={2017},
   pages={133--148}, 
}

\bib{MR3213144}{article}{
   author={Kimura, Yasunori},
   author={Saejung, Satit},
   author={Yotkaew, Pongsakorn},
   title={The Mann algorithm in a complete geodesic space with curvature
   bounded above},
   journal={Fixed Point Theory Appl.},
   date={2013},
   pages={2013:336, 1--13},
}

\bib{MR2416076}{article}{
   author={Kirk, W. A.},
   author={Panyanak, B.},
   title={A concept of convergence in geodesic spaces},
   journal={Nonlinear Anal.},
   volume={68},
   date={2008},
   pages={3689--3696},
}

\bib{MR0298899}{article}{
   author={Martinet, B.},
   title={R\'egularisation d'in\'equations variationnelles par
   approximations successives},
   journal={Rev. Fran\c caise Informat. Recherche Op\'erationnelle},
   volume={4},
   date={1970},
   pages={154--158},
}

\bib{MR1651416}{article}{
   author={Mayer, Uwe F.},
   title={Gradient flows on nonpositively curved metric spaces and harmonic
   maps},
   journal={Comm. Anal. Geom.},
   volume={6},
   date={1998},
   pages={199--253},
}

\bib{MR3396425}{article}{
   author={Ohta, S.},
   author={P{\'a}lfia, Mikl{\'o}s},
   title={Discrete-time gradient flows and law of large numbers in
   Alexandrov spaces}, 
   journal={Calc. Var. Partial Differential Equations},
   volume={54},
   date={2015}, 
   pages={1591--1610}, 
}

\bib{MR0410483}{article}{
   author={Rockafellar, R. Tyrrell},
   title={Monotone operators and the proximal point algorithm},
   journal={SIAM J. Control Optim.},
   volume={14},
   date={1976},
   pages={877--898},
}

\bib{MR2548424}{book}{
   author={Takahashi, Wataru},
   title={Introduction to Nonlinear and Convex Analysis},
   publisher={Yokohama Publishers, Yokohama},
   date={2009},
}

\bib{Yokota-JMSJ16}{article}{
   author={Yokota, Takumi},
   title={Convex functions and barycenter on ${\rm CAT}(1)$-spaces of small radii},
   journal={J. Math. Soc. Japan},
   volume={68},
   date={2016},
   pages={1297--1323},
}

 \end{biblist}
\end{bibdiv}

\end{document}